\documentclass[11pt]{article}
 \usepackage{bbm}
\usepackage{amssymb, amsthm, amsmath, amscd}
\setlength{\topmargin}{-45pt} \setlength{\evensidemargin}{0cm}
\setlength{\oddsidemargin}{0cm} \setlength{\textheight}{23.7cm}
\setlength{\textwidth}{16cm}

\newtheorem{thm}{Theorem}[section]
\newtheorem{lem}[thm]{Lemma}
\newtheorem{prop}[thm]{Proposition}
\newtheorem{cor}[thm]{Corollary}
\newtheorem{NN}[thm]{}
\theoremstyle{definition}\newtheorem{df}[thm]{Definition}
\theoremstyle{definition}\newtheorem{rem}[thm]{Remark}
\theoremstyle{definition}

\renewcommand{\phi}{\varphi}

\newcommand{\N}{\mathbb{N}}
\newcommand{\Z}{\mathbb{Z}}
\newcommand{\Q}{\mathbb{Q}}
\newcommand{\R}{\mathbb{R}}
\newcommand{\C}{\mathbb{C}}
\newcommand{\T}{\mathbb{T}}

\newcommand{\Aff}{\operatorname{Aff}}

\newcommand{\morp}{contractive completely positive linear map}

\newcommand{\hm}{homomorphism}
\newcommand{\dt}{\delta}
\newcommand{\ep}{\epsilon}


\newcommand{\cd}{\cdots}


\newcommand{\andeqn}{\,\,\,{\rm and}\,\,\,}
\newcommand{\rforal}{\,\,\,{\rm for\,\,\,all}\,\,\,}
\newcommand{\CA}{$C^*$-algebra}

\newcommand{\SCA}{$C^*$-subalgebra}

\newcommand{\af}{{\alpha}}
\newcommand{\bt}{{\beta}}

\newcommand{\beq}{\begin{eqnarray}}
\newcommand{\eneq}{\end{eqnarray}}
\newcommand{\tforal}{\,\,\,\text{for\,\,\,all}\,\,\,}
\newcommand{\tand}{\,\,\,\text{and}\,\,\,}
\newcommand{\wtlog}{without loss of generality}

\usepackage{amsfonts}
\usepackage{mathrsfs}
\usepackage{textcomp}
\usepackage[all]{xy}

\title{ Crossed products and minimal dynamical systems}
\author{Huaxin Lin
 }
\date{}

\begin{document}

\maketitle

\begin{abstract}
Let $X$ be an infinite compact metric space with finite covering dimension and let $\af, \bt : X\to X$ be two minimal 
homeomorphisms. We prove that the crossed product \CA s $C(X)\rtimes_\af\Z$  and $C(X)\rtimes_\bt\Z$ are 
isomorphic if and only if they have isomorphic Elliott invariant.  In a more general setting, we show 
that if $X$ is an infinite compact metric space and if $\af: X\to X$ is a minimal homeomorphism 
such that $(X, \af)$ has mean dimension zero, then the tensor product of the crossed product with a UHF-algebra
of infinite type has generalized tracial rank at most one. This implies that the crossed product 
is in a classifiable class of amenable simple \CA s.
 \end{abstract}
 
 \section{Introduction}
 
 Let $X$ be an infinite  compact metric space and let $\af: X\to X$ be a minimal homeomorphism.
 It is well known that the crossed product \CA\, $C(X)\rtimes_\af\Z$ is a unital simple separable amenable 
 \CA\, which satisfies the Universal Coefficient Theorem (UCT).  There has been a great deal of interaction between 
 dynamical systems and \CA\, theory.  Let $\bt: X\to X$ be another minimal homeomorphism.
 A result of Tomiyama (\cite{To}) stated that two dynamical systems $(X, \af)$ and $(X, \bt)$ are flip conjugate if and only 
 if  the crossed products $C(X)\rtimes_\af\Z$ and $C(X)\rtimes_\bt Z$ are isomorphic preserving $C(X).$ 
 When $X$ is the Cantor set,  Giordano, Putnam  and Skau (\cite{GPS}) proved that two such systems are strong 
 orbit equivalent if and only if the crossed products are isomorphic as \CA s.  In this case, 
 the crossed products are isomorphic to a unital simple $A\T$-algebra of real rank zero. 
 The Elliott program of classification of amenable \CA s plays  an important role in this case.  In fact, 
 $K$-theory can be used to determine when two minimal Cantor systems are strong orbit equivalent. 
 
 The irrational rotation algebras may be viewed as crossed products from the minimal dynamical systems 
 on the circle by irrational rotations. Elliott and Evans (\cite{EE}) proved a structure theorem which states that
 every irrational rotation \CA\, is isomorphic to a unital simple $A\T$-algebra of real rank zero (there 
 are also some earlier results such as \cite{CEY}).   With the rapid development in the Elliott program, 
 it becomes increasingly important to answer the question when  crossed product \CA s from
  minimal dynamical systems are classifiable. If $A$ is a unital separable simple \CA\, of stable rank one
 with the tracial state space $T(A),$ then the image of $K_0(A)$ under $\rho_A$ in $\Aff(T(A))$ is dense if $A$ has real rank zero
 (\cite{BH}), where $\rho_A: K_0(A)\to \Aff(T(A))$ is defined by 
 $\rho_A([p])=\tau(p)$ for all projection $p\in A$ and for all $\tau\in T(A)$ (see \ref{Drho} below).
 Let $X$ be an infinite  compact metric space with finite covering dimension and let $\af: X\to X$ be a minimal homeomorphism. 
 In \cite{LP}, it shown that 
  $C=C(X)\rtimes_\af\Z$ has tracial rank zero if and only 
 if the image of $K_0(C)$  under $\rho_C$ is dense in $\Aff(T(C)).$ 
 If $X$ is connected and $\af$ is uniquely ergodic, and if 
 the rotation number of $(X, \af)$ has an irrational value, then the image of $K_0(C)$ is dense in 
 $\Aff(T(C)).$   This recovers the earlier result of Elliott and Evans \cite{EE}.  This is because that 
 unital separable simple amenable \CA s of tracial rank zero in the UCT class can be classified by the Elliott invariant up to isomorphism (\cite{Lnduke}).
 The Elliott program  recently moved beyond the \CA s of finite tracial rank.  With Winter's method (see \cite{WI}),
 simple \CA s with finite rational tracial rank in the UCT class have been classified by their 
 Elliott invariant (see \cite{Lncrell}, \cite{LNlift} and \cite{Lninv}). Toms and Winter (\cite{TW}) improved the result in \cite{LP} by showing that,  if projections 
 in $C(X)\rtimes_\af\Z$ separate the tracial state space (the set of $\af$-invariant Borel probability measures), then 
 the crossed products have rational tracial rank zero and therefore are classifiable by Elliott invariant. Suppose 
 that $\af,\,\bt: X\to X$ are two minimal homeomorphisms and both are uniquely ergodic.
 Then $C(X)\rtimes_\af\Z$ is isomorphic to $C(X)\rtimes_\bt \Z$ if and only if their $K$-groups 
 are isomorphic in a way which also preserves the order and order unit of their $K_0$-groups. 
 However,  as early as the 1960's, Furstenburg (\cite{Fur}) presented minimal homeomorphisms 
 on the 2-torus which are not Lipschitz. The set of projections of  the crossed product \CA s associated with the minimal 
 dynamical systems  in this case could not separate the tracial state space of the crossed products. 
 The author was asked whether crossed products of Furtenburg transformation on 2-torus which are not Lipschitz 
 are $A\T$-algebras. 
 These crossed products have the same Elliott invariant  as those of unital simple \CA s with tracial rank one (but not zero). 
 They do not have rational tracial rank zero. On the other hand further developments were made by K. Strung (\cite{Sk}) who showed that the crossed product \CA s from certain minimal homeomorphisms 
 on odd dimensional spheres have rational tracial rank one  and therefore are classifiable. In these cases, projections in the crossed products may not separate the tracial state space. 
 It   has lately been  shown that the crossed product \CA s from any minimal homeomorphisms on  
 $2d+1$ spheres (with $d\ge 1$)  and other odd dimensional spaces have rational tracial rank at most one (\cite{Lncjm15}).  These results, however,   do not cover 
 the cases of the 2-torus given by Furstenburg.  In fact,  there are  many minimal dynamical systems  on connected 
 spaces with complicated 
   simplexes of invariant probability Borel measures.  In this note we show the following:

 \begin{thm}\label{MTA}
 Let $X$ be an infinite compact metric space  with finite covering dimension and let $\af: X\to X$ be a minimal homeomorphism.
 Then $C(X)\rtimes_\af\Z$ belongs  to ${\cal N}_1^{\cal Z},$ a classifiable class of \CA s (see \ref{DN1} below).
 If $Y$ is another compact metric space and let $\bt: Y\to Y$ be another minimal homeomorphism.
 Then $C(X)\rtimes_\af\Z\cong C(Y)\rtimes_\bt \Z$ if and only 
 if ${\rm Ell}(C(X)\rtimes)_\af\Z)\cong {\rm Ell}(C(Y)\rtimes_\bt \Z).$ 
 \end{thm}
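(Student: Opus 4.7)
The plan is to reduce Theorem~\ref{MTA} to the main technical theorem announced in the abstract together with the classification theorem available for the class ${\cal N}_1^{\cal Z}.$

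First, since $X$ has finite covering dimension, the system $(X, \af)$ has mean dimension zero; this follows from the Lindenstrauss--Weiss bound together with the standard sub-additivity of covering dimension under products, which forces the mean dimension to vanish whenever $\dim X<\infty.$ Consequently the hypothesis of the abstract's technical theorem is met, and for any UHF algebra $U$ of infinite type, $(C(X)\rtimes_\af\Z)\otimes U$ has generalized tracial rank at most one. The algebra $C(X)\rtimes_\af\Z$ is itself unital, simple, separable, nuclear and satisfies the UCT (by amenability of $\Z$ and standard crossed-product facts), and these properties pass to the tensor product with $U.$

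Next, I would assemble these ingredients to place $C(X)\rtimes_\af\Z$ inside ${\cal N}_1^{\cal Z}$ as in Definition~\ref{DN1}. The fact that tensoring with every infinite-type UHF algebra produces a \CA\ of generalized tracial rank at most one yields, by now-standard arguments (strict comparison, plus Matui--Sato-type methods in the simple nuclear setting), $\cal Z$-stability of $C(X)\rtimes_\af\Z$ itself, and the UCT plus this tensor-product information is precisely the content of membership in the class. For the isomorphism statement, the ``only if'' direction is automatic; for ``if'', I would invoke the classification theorem for ${\cal N}_1^{\cal Z},$ applying it to $C(X)\rtimes_\af\Z$ and $C(Y)\rtimes_\bt\Z,$ both of which now lie in this class.

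The decisive step, and essentially the content of the paper, is the tensor-product statement behind step two: proving that $(C(X)\rtimes_\af\Z)\otimes U$ has generalized tracial rank at most one whenever $(X,\af)$ has mean dimension zero. I would expect this to proceed via an orbit-breaking (Putnam-type) subalgebra argument, exhibiting the crossed product as a limit of recursive subhomogeneous building blocks and then refining the approximations after tensoring with $U$ so that the relevant spectra can be controlled up to one-dimensional pieces. Mean dimension zero is exactly the dynamical input needed to bound the growth of the ``dimension in the spectrum'' of these building blocks along the approximation, and this is where the main technical obstacle lies.
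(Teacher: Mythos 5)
There are two genuine gaps in your plan. First, your route to ${\cal Z}$-stability of $C(X)\rtimes_\af\Z$ itself does not work: knowing that $(C(X)\rtimes_\af\Z)\otimes U$ has generalized tracial rank at most one for every UHF algebra $U$ of infinite type does not, by ``standard arguments,'' give strict comparison or ${\cal Z}$-stability of the algebra before tensoring (Villadsen/Toms-type examples have badly perforated $K_0$ while becoming TAI after tensoring with a UHF algebra, and Giol--Kerr minimal systems show crossed products of minimal homeomorphisms need not be ${\cal Z}$-stable at all). Moreover the Matui--Sato implication ``strict comparison $\Rightarrow$ ${\cal Z}$-stability'' was available only under strong restrictions on the tracial simplex (finitely many, or a compact finite-dimensional extreme boundary), whereas the paper explicitly emphasizes that these crossed products can have very complicated simplexes of invariant measures. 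The paper does not derive ${\cal Z}$-stability from the tracial-rank statement; it quotes the Elliott--Niu theorem \cite{EN} that mean dimension zero forces ${\cal Z}$-stability, and only then combines it with Corollary \ref{CC1} to get membership in ${\cal N}_1^{\cal Z}$ (Definition \ref{DN1}), after which the classification theorem of \cite{GLN} gives the ``if and only if'' statement, as you say.

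Second, your proposed proof of the decisive step --- orbit-breaking subalgebras and a recursive subhomogeneous inductive-limit picture whose spectra are controlled by mean dimension zero --- is not a viable strategy in this generality and is not what the paper does: a recursive subhomogeneous decomposition of the crossed product (or of Putnam-type orbit-breaking subalgebras) is only known for minimal diffeomorphisms of manifolds (Q.~Lin--Phillips), not for arbitrary infinite compact metric $X$, and no mechanism is offered for ``refining the approximations after tensoring with $U$.'' The paper instead proves the stronger Theorem \ref{MTD}, valid for \emph{every} minimal homeomorphism with no mean dimension hypothesis, by a transfer argument: use the range theorem of \cite{GLN} to produce a model algebra $A$ with $gTR(A)\le 1$ and the same Elliott invariant as $(C(X)\rtimes_\af\Z)\otimes Q$; construct, via the existence theorem \ref{ExC}, a monomorphism $C(X)\to A$ realizing the invariant; use the uniqueness theorem \ref{UniCtoAC} to find unitaries approximately implementing $\af$; assemble approximately multiplicative maps on the crossed product (through $l^\infty(A)/c_0(A)$ and amenability), check tracial convergence with Lemma \ref{Traceunif}, and conclude with Lemma \ref{L1}, which rests on Winter's characterization \ref{LW} and the finite nuclear dimension result of \cite{ENSTW}. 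So while your global reduction (finite dimension $\Rightarrow$ mean dimension zero, then classification in ${\cal N}_1^{\cal Z}$) matches the paper, the two steps that carry the real weight --- ${\cal Z}$-stability and the UHF-stable tracial rank bound --- are not established by your argument as proposed.
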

 \vspace{-0.1in}
 \noindent(see  \ref{DEll}  for the definition  ${\rm Ell}(\cdot)$ below).
 
 We actually prove the following:
 
 \begin{thm}\label{MTB}
 Let $X$ be an infinite  compact metric space and let $\af: X\to X$ be a minimal homeomorphism
 such that $(X, \af)$ has mean dimension zero. 
 Then $C(X)\rtimes_\af\Z$ belongs  to ${\cal N}_1^{\cal Z}.$
 \end{thm}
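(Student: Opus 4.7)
The plan is to tensor $A=C(X)\rtimes_\af\Z$ with a UHF algebra $U$ of infinite type, show that $A\otimes U$ has generalized tracial rank at most one, and then invoke a Winter-type tensor-absorption argument to deduce that $A$ itself lies in $\mathcal{N}_1^{\mathcal Z}$. Since $(X,\af)$ is minimal, $A$ is already known to be unital, separable, simple, nuclear, and in the UCT class, so only the structural (regularity) property is at issue. The absorption step (passing from $A\otimes U$ to $A$) is standard by now in the wake of \cite{Lncrell}, \cite{LNlift}, \cite{Lninv}, so the real work is controlling the structure of $A\otimes U$.

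The main engine is the Putnam/orbit-breaking subalgebra $A_Y\subseteq A$ associated to a closed set $Y\subseteq X$, generated by $C(X)$ and by $u\cdot C_0(X\setminus Y)$ where $u$ is the canonical unitary implementing $\af$. For a carefully chosen $Y$ (small, and meeting every orbit), Phillips's theory of centrally large subalgebras shows that $A_Y$ is large in $A$, so that tracial and Cuntz-comparison data transfer from $A_Y\otimes U$ to $A\otimes U$, and in particular the generalized tracial rank at most one passes from $A_Y\otimes U$ up to $A\otimes U$. Thus the problem reduces to showing that $A_Y\otimes U$ has generalized tracial rank at most one. The advantage of $A_Y$ is that it admits a concrete inductive-limit description as a subhomogeneous algebra built from Rokhlin columns over $Y$, whose local building blocks are of the form $C(X_{n,j})\otimes M_{k_{n,j}}$ for pieces $X_{n,j}$ of $X$.

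This is where mean dimension zero enters. Following ideas of Lindenstrauss--Weiss and their incarnation in the $C^*$-literature (Elliott--Niu, Kerr, Niu), mean dimension zero provides, for every $\ep>0$ and every $N$, Rokhlin towers of height $\ge N$ whose bases $Y$ have arbitrarily small $\mu$-measure uniformly in $\mu\in M_\af(X)$ and, more importantly, so that the covering-dimension growth of the base pieces is sublinear in the tower height. After tensoring with $U$, this sublinear dimension growth is absorbed: the large matrix factor $M_{k_{n,j}}$ allows one to approximate, in the tracial sense, the homogeneous blocks $C(X_{n,j})\otimes M_{k_{n,j}}\otimes U$ by interval algebras with arbitrarily small error on any given finite set. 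Carrying out this approximation at every stage of the inductive limit describing $A_Y\otimes U$ yields a tracial approximation of $A_Y\otimes U$ by interval algebras, i.e.\ generalized tracial rank at most one.

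The step I expect to be the main obstacle is precisely the construction, from the hypothesis of mean dimension zero alone (without finite covering dimension on $X$), of Rokhlin towers whose bases $Y$ simultaneously (i) make $A_Y$ a centrally large subalgebra, (ii) have uniformly small boundary with respect to every invariant measure, and (iii) produce base pieces $X_{n,j}$ whose dimensions grow slowly enough to be absorbed by the UHF factor $U$. Earlier work such as \cite{TW} and \cite{Lncjm15} could sidestep much of this by using finite covering dimension of $X$ together with a hypothesis forcing enough projections; here one has no projection hypothesis and one replaces ambient dimension by the dynamical mean dimension. Translating the asymptotic vanishing of mean dimension into the uniform combinatorial control of these towers, and then ensuring that the associated $A_Y$ admits the inductive description needed for the $U$-absorption argument, is the technical heart of the proof.
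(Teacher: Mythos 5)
Your proposal takes a route through orbit-breaking subalgebras and Rokhlin towers that is genuinely different from the paper, but as written it has two serious gaps. The first is the transfer step: you assert that because $A_Y$ is (centrally) large in $A=C(X)\rtimes_\alpha\Z$, ``generalized tracial rank at most one passes from $A_Y\otimes U$ up to $A\otimes U$.'' No such permanence property is available. What largeness gives you is control of traces, Cuntz comparison, radius of comparison, stable rank one, and (by Archey--Phillips type arguments) tracial $\mathcal Z$-absorption; it does not give inheritance of a tracial approximation property $TA\mathcal{S}$ by the ambient algebra, because the tracially large corner produced inside $A_Y\otimes U$ need not approximately commute with the canonical unitary $u$. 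This is exactly the obstruction that forced the extra hypotheses in \cite{LP} (density of $\rho(K_0)$ in $\Aff(T)$) and \cite{TW} (projections separating traces) when passing from the orbit-breaking subalgebra to the crossed product; invoking largeness alone does not close it. The second gap is the structural claim about $A_Y\otimes U$: the canonical decomposition of $A_Y$ is by recursive subhomogeneous algebras whose base spaces are closed subsets of $X$ and whose matrix sizes are bounded by first-return times, so with only mean dimension zero (and $\dim X=\infty$) there is no a priori relation of the form ``dimension of the base grows sublinearly in the tower height'' for this particular decomposition; the towers produced by mean-dimension/small-boundary techniques are not the same as the first-return decomposition of a fixed $A_Y$, and bridging the two, plus handling $K_1$ and the gluing maps so that UHF-tensoring really yields tracial approximation by interval algebras, is precisely what is missing. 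So the ``technical heart'' you flag is not the only missing piece; the reduction itself is unproved.

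For comparison, the paper's proof uses mean dimension zero only once, through the theorem of \cite{EN} that $(X,\alpha)$ of mean dimension zero gives a $\mathcal Z$-stable crossed product. The structural statement is proved for \emph{every} minimal homeomorphism of an infinite compact metric space: $gTR((C(X)\rtimes_\alpha\Z)\otimes U)\le 1$ for any UHF algebra $U$ of infinite type. This is done abstractly rather than dynamically: by the range theorem of \cite{GLN} there is a model algebra $A$ with $gTR(A)\le 1$ and the same Elliott invariant as $(C(X)\rtimes_\alpha\Z)\otimes Q$; an existence theorem produces a monomorphism $\phi':C(X)\to A$ with prescribed $KL$-, trace- and rotation-related data, a uniqueness theorem shows $\phi'$ and $\phi'\circ\alpha$ are approximately unitarily equivalent, and the conjugating unitaries yield approximately multiplicative c.c.p.\ maps from the crossed product to $A$ with good tracial behaviour; finally Winter's characterization of $TA\mathcal{C}$ (\cite{W13}), together with finite nuclear dimension of the crossed product (\cite{ENSTW}), gives $gTR\le 1$ after tensoring with $Q$. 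If you want to pursue your dynamical route, you would need either a new permanence theorem for $TA\mathcal{C}$ under centrally large subalgebras or a direct tower argument in $A\otimes U$ itself, neither of which is supplied by the references you lean on.
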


It should noted that if $X$ has finite dimension, then the minimal dynamical system $(X, \af)$ always has mean dimension zero.  So Theorem \ref{MTA} follows from \ref{MTB}.
 Moreover, if the minimal dynamical system $(X, \af)$ has countably many 
extreme $\af$-invariant Borel probability measures, then $(X, \af)$ has mean dimension zero.  
In fact, we show that $(C(X)\rtimes_\af\Z)\otimes {\cal Z}$ is always classifiable, where ${\cal Z}$ is the Jiang-Su algebra.
It is proved in \cite{EN} that crossed product \CA s from minimal dynamical systems of mean dimension zero 
are ${\cal Z}$-stable.

 Let $X$ be a compact manifold and $\af: X\to X$ be a minimal  diffeomorphism.  In  a long  paper,  Q. Lin and N.C. Phillips (\cite{qLP})
showed that $C(X)\rtimes_\af\Z$ is an inductive limit of recursive sub-homogenous \CA s (with bounded dimension in their 
spectrum). With the classification result in \cite{GLN}, we offer the following:

\begin{cor}\label{MCC}
 Let $X$ be an infinite  compact metric space and let $\af: X\to X$ be a minimal homeomorphism
 such that $(X, \af)$ has mean dimension zero. Then $C(X)\rtimes_\af\Z$ is an inductive limit 
 of sub-homogenous \CA s described in {\rm \ref{DMtorus}} with dimension of the spectrum at most three. 
\end{cor}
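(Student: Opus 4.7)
The strategy is to reduce the corollary to a direct application of the classification theorem of \cite{GLN}. By Theorem~\ref{MTB}, the crossed product $A:=C(X)\rtimes_\alpha\Z$ already belongs to the classifiable class $\mathcal{N}_1^{\mathcal{Z}}$; consequently it suffices to exhibit some inductive limit $B$ of the subhomogeneous building blocks from~\ref{DMtorus} (with spectral dimension at most three) whose Elliott invariant agrees with that of $A$, since the classification theorem will then produce an isomorphism $A\cong B$, and $A$ will inherit the desired inductive limit decomposition.

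First I would extract $\mathrm{Ell}(A)=(K_0(A),K_0(A)^+,[1_A],K_1(A),T(A),\rho_A)$ and record its abstract shape. Since $A$ is unital, simple, separable, amenable, $\mathcal{Z}$-stable and satisfies the UCT with generalized tracial rank at most one, the ordered $K_0$-group is weakly unperforated, the pairing $\rho_A$ is compatible with $T(A)$, and the $K$-groups are countable; no explicit computation of these groups is needed for the argument.

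Next I would invoke the range-of-invariant theorem accompanying the classification in \cite{GLN}: every Elliott invariant of this abstract form is realized by an inductive limit of the subhomogeneous algebras described in~\ref{DMtorus}, and in that construction the covering dimension of the spectrum of each building block can be taken to be at most three. Applied to $\mathrm{Ell}(A)$ this yields a concrete model $B$, and the uniqueness part of the classification in \cite{GLN} produces the isomorphism $A\cong B$, completing the reduction.

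The main obstacle, such as it is, lies in confirming that the three-dimensional bound on the spectrum of the building blocks really is sufficient for all Elliott invariants arising from our crossed products, in spite of the fact that $X$ itself may have arbitrarily large or even infinite covering dimension. This is precisely the point at which one leans on the range theorem in \cite{GLN} rather than on any direct geometric construction from $(X,\alpha)$; once Theorem~\ref{MTB} has placed $A$ in $\mathcal{N}_1^{\mathcal{Z}}$, the spectral dimension bound in the model comes for free from the classification package, and nothing further about the original dynamics needs to be used.
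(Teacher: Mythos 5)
Your proposal is correct and follows essentially the same route as the paper: place $C(X)\rtimes_\alpha\Z$ in $\mathcal{N}_1^{\mathcal{Z}}$ via Theorem~\ref{MTB}, use the range theorem (Theorem 13.41 of \cite{GLN}) to produce an inductive limit model of the subhomogeneous algebras in \ref{DMtorus} with the same Elliott invariant, and conclude by the isomorphism theorem (Theorem 29.4 of \cite{GLN}). The paper's proof is exactly this three-step reduction, so no further comparison is needed.
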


The proof of these results is based on recent advances in the Elliott program. In \cite{GLN}, it is shown 
that the class ${\cal N}_1^{\cal Z}$ of unital separable simple ${\cal Z}$-stable \CA s which have rational generalized tracial rank 
at most one in the UCT class can be classified by the Elliott invariant.  
It should be noted that  unital infinite dimensional simple 
\CA s that have generalized tracial rank at most one  are all ${\cal Z}$-stable. The range theorem in \cite{GLN} shows 
that the class ${\cal N}_1^{\cal Z}$ exhausts the Elliott invariants of all possible unital simple amenable ${\cal Z}$-stable 
\CA s and the range of the Elliott invariants is characterized. Moreover, it also shows that a \CA\, $A\in {\cal N}_1^{\cal Z}$ 
is isomorphic to a  unital simple inductive limit of subhomogenous algebras of a certain  special form (see \ref{DMtorus} below)
such that the spectrum has dimension at most three.  This implies that $C(X)\rtimes_\af\Z$ has the same Elliott invariant as one of the model \CA s in \cite{GLN} as long as it is ${\cal Z}$-stable.
This immediately provides an opportunity for $C(X)\rtimes_\af\Z$ to be embedded into a model \CA s as presented in \cite{GLN}. 
Indeed an earlier proof of the main result of this note did just that. However, 
this is unnecessarily difficult since it involves  exact embedding which requires usage of an asymptotic unitary equivalence 
theorem and repeated usage of a version of Basic Homotopy Lemma. It also uses a new characterization of 
TA${\cal C}$ for some special class ${\cal C}$ of weakly semiprojective \CA s. 
We realized later that it is not necessary to prove the actual embedding. An approximate version of it would be 
sufficient. This greatly simplifies the proof. 

It is worth point out that, without referring to the more recent long paper \cite{GLN},  but referring 
to \cite{Lninv} and  \cite{LNjfa}, the proof we used in this paper 
still gives the following result: 
\begin{thm}\label{Trtr1}
Let $X$ be an infinite  compact metric space and let $\af: X\to X$ be a minimal homeomorphism
 Then $C(X)\rtimes_\af\Z$ has rationally tracial rank at most one  if 
and only if $K_0(C(X)\rtimes \Z)\otimes \Q$ is a dimension group and
each extremal trace gives an extremal state 
on $K_0(C(X)\rtimes \Z.$ 
 \end{thm}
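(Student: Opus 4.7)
The plan is to establish both directions by extending the approach used for Theorem \ref{MTB}, but with a more restrictive target model. For necessity, suppose $A = C(X)\rtimes_\af\Z$ has rational tracial rank at most one, so that $A\otimes U$ has tracial rank at most one for some UHF algebra $U$ of infinite type. The classification of unital simple \CA s of tracial rank at most one in the UCT class (as in \cite{Lninv}) implies that $K_0(A\otimes U)=K_0(A)\otimes\Q$ is a simple dimension group; moreover the tracial state space $T(A\otimes U)=T(A)$ determines the order on $K_0(A\otimes U)$, and extremal traces correspond to extremal states on $K_0$. These properties pass back to $A$, yielding the necessity direction.

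For sufficiency, fix an infinite-type UHF algebra $U$ and write $A=C(X)\rtimes_\af\Z$. The hypotheses that $K_0(A)\otimes\Q$ is a dimension group and that extremal traces induce extremal states on $K_0$ are precisely what is required to realize the Elliott invariant of $A\otimes U$ as that of a unital simple AH-algebra $B$ of tracial rank at most one. This is done via the range-of-invariant theorems in \cite{Lninv} and \cite{LNjfa}, which describe the Elliott invariants attainable by algebras of tracial rank at most one. With such $B$ in hand, one invokes the same approximate embedding machinery used in the proof of Theorem \ref{MTB}: for every finite set $\F\sbs A$ and every $\ep>0$, one produces a factorization of $\F$ through $B\otimes U$ that is approximate in norm and in trace and that matches specified Elliott invariant data on a controlled finite subset of the $K$-theory. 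This yields that $A\otimes U$ satisfies the tracial approximation definition of tracial rank at most one, so $A$ has rational tracial rank at most one.

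The main obstacle will be ensuring that the approximate embedding actually lands in a tracial-rank-one model rather than merely in a member of ${\cal N}_1^{\cal Z}$ built from the subhomogeneous blocks of \ref{DMtorus}. Without the dimension-group hypothesis on $K_0(A)\otimes\Q$, no tracial-rank-one model with the prescribed Elliott invariant exists, so one could only target a generalized tracial rank at most one algebra; with the hypothesis in place, $B$ may be constructed as a simple AH-algebra and the Basic Homotopy Lemma together with the tracial approximation techniques from \cite{Lninv} become applicable. The delicate point is arranging the approximation through the Putnam-type subalgebras $A_y\sbs A$ so that the tracial data, and in particular the extremal-trace-to-extremal-state condition, is transferred faithfully to the model $B$ at every stage of the intertwining.
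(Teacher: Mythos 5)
Your proposal is correct and takes essentially the paper's route: the ``only if'' direction is obtained from \cite{LNjfa}, and for the ``if'' direction the paper likewise invokes \cite{LNjfa} to produce a model $A$ with ${\rm Ell}(A)\cong {\rm Ell}(C(X)\rtimes_\af\Z)$ and $TR(A\otimes Q)\le 1$, and then reruns the approximate-embedding proof of Theorem \ref{MTD} word for word to conclude $TR((C(X)\rtimes_\af\Z)\otimes Q)\le 1$. Only your closing worry is misplaced: the machinery actually used (the existence and uniqueness theorems for maps out of $C(X)$, Lemma \ref{Traceunif}, finite nuclear dimension from \cite{ENSTW}, and the Winter-type characterization behind Lemma \ref{L1}) involves neither the Basic Homotopy Lemma nor Putnam-type orbit-breaking subalgebras, which belong to the older, harder approach the paper explicitly abandons.
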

 
  This result, of course, is a corollary of \ref{MTB}. But it has already covered significant large class of crossed from minimal action on 
compact metric spaces. For example, this result implies that the crossed products from non-Lipschitz Furstenburg transformations
on 2-torus are unital simple $A\T$-algebras.  Theorem \ref{Trtr1} also covers all minimal actions 
on odd dimensional spaces  studied in \cite{Lncjm15}
(see \ref{CCC} below and its remark).

This note is organized as follows. The next section serves as  a preliminary for the later sections. In Section 3, using  a 
recent characterization of $TA{\cal C}$ for unital simple \CA s with finite nuclear dimension (\cite{W13}), we present another convenient 
characterization for a unital separable simple amenable \CA\, to have rational generalized tracial rank at most one.
In Section 4, we present a few refinements of results in \cite{GLN} to be used in this note. In Section 5, we present 
the proof for the main results presented earlier in the introduction. 

\vspace{0.2in}

The main results  of this note were reported at  ``Dynamics and C*-algebras: amenability and soficity", 
a workshop at BIRS Banff Research Station in October 2014. 

\vspace{0.1in}

{\bf Acknowledgement}: This work was partially supported by  The Research Center for Operator Algebras 
at East China Normal University and by an NSF grant.  The author would like to thank Z. Niu who informed the 
the preprint \cite{ENSTW}.

  \section{Preliminaries}

\begin{df}\label{Du}
{\rm
Let $A$ be a unital \CA. Denote by $U(A)$ the unitary group of $A$ and $U_0(A)$ the normal subgroup
of $U(A)$ consisting of the path connected component containing $1_A.$  Denote by
$CU(A)$ the closure of the commutator subgroup of $U_0(A).$
The map $u\mapsto {\rm diag}(u, 1_A)$ gives a \hm\, from $U(M_n(A))$ to $U(M_{n+1}(A))$ for each integer $n\ge 1.$
We write $U(M_{\infty}(A))$ for $\cup_{n=1}^{\infty}U(M_n(A))$ by using the above inclusion.
Note we also use the notation  $U_0(M_{\infty}(A))=\cup_{n=1}^{\infty}U_0(M_n(A))$ and 
$CU(M_{\infty}(A))=\cup_{n=1}^{\infty}CU(M_n(A)).$ 
}
\end{df}
\begin{df}\label{Aq}
{\rm Let $A$ be a unital \CA\, and let $T(A)$ be the tracial state space.
Let $\tau\in T(A).$ We say that $\tau$ is faithful if $\tau(a)>0$ for all $a\in A_+\setminus\{0\}.$
Denote by $T_{\bf f}(A)$ the set of all faithful tracial states.

Denote by $\Aff(T(A))$ the space of all real continuous affine functions on $T(A)$ and 
denote by ${\rm LAff}_b(T(A))$ be the set of all bounded lower-semi-continuous real affine functions on $T(A).$ 

Suppose that $T(A)\not=\emptyset.$ There is an affine  map
$r_{aff}: A_{s.a.}\to \Aff(T(A))$ by
$$
r_{aff}(a)(\tau)=\hat{a}(\tau)=\tau(a)\tforal \tau\in T(A)
$$
and for all $a\in A_{s.a.}.$ Denote by $A_{s.a.}^q$ the image $r_{aff}(A_{s.a.})$ and
$A_+^q=r_{aff}(A_+).$

{For each integer $n\ge 1$ and $a\in M_n(A),$
write $\tau(a)=(\tau\otimes Tr)(a),$ where $Tr$ is the (non-normalized) trace on $M_n.$}
}

\end{df}

\begin{df}\label{Drho}
{Let $A$ be a  stably finite \CA\, with $T(A)\not=\emptyset.$ Denote by $\rho_A: K_0(A)\to \Aff(T(A))$ the  order preserving \hm\,  defined by $\rho_A([p])=\tau(p)$ for any projection $p\in M_n(A)$ (see the above convention), $n=1,2,....$}

{Let $A$ be a unital stably finite \CA. A map $s: K_0(A)\to \R$ is said to be a state if $s$ is an order preserving \hm\, such that
$s([1_A])=1.$ The set of states on $K_0(A)$ is denoted by $S_{[1_A]}(K_0(A)).$ }

{Denote by $r_A: T(A)\to S_{[1_A]}(K_0(A))$ the map defined by $r_A(\tau)([p])=\tau(p)$ for all projection 
$p\in M_n(A)$ (for any integer $n$) and for all $\tau\in T(A).$ }
\end{df}

{
\begin{df}\label{DEll}
Let $A$ and $C$ be two unital separable stably finite \CA\, with $T(C)\not=\emptyset$ and 
$T(A)\not=\emptyset.$ Let $\kappa_i: K_i(C)\to K_i(A)$ ($i=0,1$) be \hm\, and let 
$\lambda: T(A)\to T(C)$ be an affine continuous map.
We say that $(\kappa_0, \lambda)$ is compatible, if 
$$
r_C(\lambda(t))(x)=r_A(t)(\kappa_0(x))\rforal x\in K_0(C)\andeqn \rforal t\in T(A).
$$
Denote by $\lambda_{\sharp}: \Aff(T(C))\to \Aff(T(A))$ the induced affine continuous map defined by
$\lambda_{\sharp}(f)(\tau)=f\circ \lambda(\tau)$ for all $f\in \Aff(T(A))$ and $\tau\in T(C).$
When $(\kappa_0, \lambda)$ is compatible, denote by ${\overline{\gamma_{\sharp}}}: 
\Aff(T(C))/\overline{\rho_A(K_0(C))}\to \Aff(T(A))\overline{\rho_A(K_0(A))}$ the induced continuous map. 

Let $A$ be a unital simple \CA. The Elliott invariant of $A$, denote by ${\rm Ell}(A)$ is the following
six tuple
$$
{\rm Ell}(A)=(K_0(A), K_0(A)_+, [1_A], K_1(A), T(A), r_A).
$$
Suppose that $B$ is another unital simple \CA. We write  ${\rm Ell}(A)\cong {\rm Ell}(B),$ if
there is an order isomorphism $\kappa_0: K_0(A)\to K_0(B)$
such that $\kappa_0([1_A])=[1_B],$ an isomorphism $\kappa_1:
K_1(A)\to K_1(B)$ and an affine homeomorphism
$\kappa_\rho: T(B)\to T(A)$ such that $(\kappa_0, \kappa_\rho)$ is compatible.
\end{df}
}

{
\begin{df}\label{DW(A)}
Let $A$ be a \CA. Let $a, b\in M_n(A)_+$. Following Cuntz, we write
$a\lesssim b$ if there exists a sequence $(x_n)\subset M_n(A)$ such that
$\lim_{n\to\infty} x_n^*bx_n=a.$ If $a\lesssim b$ and $b\lesssim a,$ then
we write $a\sim b.$ The relation ``$\sim$" is an equivalence relation.
Denote by $W(A)$ the Cuntz semi-group of the equivalence classes of positive
elements in $\cup_{m=1}^{\infty} M_m(A)$ with orthogonal
addition and order ``$\lesssim$". 
In particular,  when $A$ has stable rank one, if $p, q\in M_n(A)$ are two projections then $p\sim q$ if and only if they are von Neumann equivalent. 
Denote by $\text{Cu}(A)$ the Cuntz semigroup $W(A\otimes {\cal K})$ with order ``$\lesssim$". 
\end{df}
}


\begin{df}\label{Aq1}
Let $A$ be a \CA.  Denote by $A^{\bf 1}$ the unit ball of $A.$
$A_+^{q, {\bf 1}}$ is the image of the intersection of $A_+\cap A^{{\bf 1}}$ in $A_+^q.$
\end{df}

\begin{df}\label{DinduceM}
Let $A$ and $B$ be two unital \CA s and let $\phi: A\to B$ be a  \hm. 
We write $\phi_{*i}: K_i(A)\to K_i(B)$ for the induced \hm s on $K$-theory and $[\phi]$ for 
the element in $KK(A,B)$ as well as $KL(A,B)$ if there is no confusion. 
We also use $\phi^{\ddag}: U(M_{\infty}(A))/CU(M_{\infty}(A))\to U(M_{\infty}(B))/CU(M_{\infty}(B))$
for the induced \hm. Suppose that $T(A)$ and $T(B)$ are both non-empty.
Then $\phi_T: T(B)\to T(A)$ is the affine continuous map 
induced by $\phi_T(\tau)(a)=\tau(\phi(a))$ for all $a\in A$ and $\tau\in T(A).$ 
Denote by $\phi^{cu}: \text{Cu}(A)\to \text{Cu}(B)$ the semigroup \hm s which preserves the order. 

We use $KL_e(A,B)^{++}$ for the subset of elements $\kappa\in KL(A,B)$ such that
$\kappa (K_0(A)\setminus\{0\})\subset K_0(B)\setminus \{0\}$ and $(\kappa([1_A])=[1_B].$
\end{df}

\begin{df}\label{DclassC}
Denote by ${\cal C}$ the class of those unital \CA s $C$ which are finite dimensional \CA s or those 
$C$ which are the pull-back:
\begin{equation}\label{pull-back}
\xymatrix{
C \ar@{-->}[rr] \ar@{-->}[d]^-{\pi_e}  && C([0,1], F_2) \ar[d]^-{(\pi_0, \pi_1)} \\
F_1 \ar[rr]^-{(\phi_0, \phi_1)} & & F_2 \oplus F_2,
}
\end{equation}
where $F_1$ and $F_2$ are finite dimensional \CA s and $\phi_i: F_2\to F_1$ are 
\hm s.  These \CA s are also called one dimensional non-commutative CW complexes (NCCW).
\CA\, $C$  can be also written as 
\beq\label{ETblock}
C=\{ (f,a)\in C([0,1], F_2)\oplus F_1: \phi_0(a)=f(0)\andeqn \phi_1(a)=f(1)\}
\eneq
and is also called Elliott-Thomsen building blocks.
Denote by ${\cal C}_0$ be the those \CA s $C$ in ${\cal C}$ with $K_1(C)=\{0\}.$ 

\CA s in ${\cal C}$ are semiprojective  (proved in  \cite{ELP}).
\end{df}

\begin{df}\label{DN1}
Let $A$ be a unital simple \CA\, and let ${\cal S}$ be a class of  unital \CA s. We say $A$ is $TA{\cal S},$ if 
for any $\ep>0,$ any finite subset ${\cal F}\subset A$ and any $a\in A_+\setminus \{0\},$ there exists 
a projection $p\in A$ and \SCA\, $C\in {\cal S}$ with $1_C=p$ such that
\beq\label{Dgtr-1}
&&\|px-xp\|<\ep \andeqn {\rm dist}(pxp, C)<\ep\rforal x\in {\cal F}\andeqn\\
&&1-p\lesssim a.
\eneq
In the case that ${\cal S}={\cal C},$ then we say $A$ has generalized tracial rank at most one and write 
 $gTR(A)\le 1.$
If $gTR(A)\le 1,$ we may say $A$ is $\text{TA}{\cal C}.$  
In the above definition, if $C\in {\cal C}_0,$ then we say $A$ is in $\text{TA}{\cal C}_0.$ 

It is proved in \cite{GLN} that, if $gTR(A\otimes Q)\le 1, $ where $Q$ is  the  UHF-algebra 
with $K_0(Q)=\Q,$ then $A\otimes Q\in \text{TA}{\cal C}_0$(Cor. 29.3 of \cite{GLN}).  By 
a result in \cite{LS}, this implies that $A\otimes U$ is $TA{\cal C}_0$ for all UHF-algebras $U$ of infinite 
type. 

Denote by ${\cal N}_1$ the class of unital simple amenable \CA s in the UCT class such that
$gTR(A\otimes U)\le 1$ for some UHF-algebra $U$ of infinite type. 

Denote by ${\cal Z}$ the Jiang-Su algebra of unital simple \CA\, with ${\rm Ell}({\cal Z})={\rm Ell}(\C)$
which is also an inductive limit sub-homogenous \CA s (\cite{JS}). Recall that a \CA\, $A$ is ${\cal Z}$-stable 
if $A\cong A\otimes {\cal Z}.$ 
Denote by ${\cal N}_1^{\cal Z}$ those \CA s in ${\cal N}_1$ which are ${\cal Z}$-stable.  
\end{df}

\begin{rem}\label{DMtorus}
~~Recall  \cite{EG}  that the finite CW complexes $T_{II,k}$  (or
$T_{III,k}$) is defined to be a 2-dimensional connected finite CW
complex with $H^2(T_{II,k})=\Z/k$ and
 $H^1(T_{II,k})=0$ (or 3-dimensional finite CW complex with $H^3(T_{III,k})=\Z/k$
 and $H^1(T_{III,k})=0=H^2(T_{III,k})$). 
For each n, there is a space $X_n'$ which is
 of form
$$X_n=[0,1]\vee S^1\vee S^1\vee\cd \vee S^1\vee T_{II,k_1} \vee T_{II,k_2} \vee \cd \vee T_{II,k_i} \vee T_{III,m_1} \vee T_{III,m_2}\vee \cd \vee T_{III,m_j}.$$
Let $P_n\in M_\infty (C(X_n))$  be a projection with rank $r(n,1).$ 
Let
$$
A_n=P_nM_{\infty}(C(X_n))P_n\bigoplus \bigoplus_{i=2}^{p_n} M_{r(n,i)}(\C)
$$
and $F_n=\bigoplus_{i=1}^{p_n}M_{r(n,i)}(\C).$
Fix a point $\xi\in X_n$ and denote by $\pi_\xi: A_n\to F_n$ by $\pi_\xi(f,a)=f(\xi)\oplus a,$ 
where $f\in P_nM_{\infty}(C(X_n))P_n$ and $a\in \oplus_{i=2}^{p_n}M_{r(n,i)}(\C).$ 
Let $E_n$ be a finite dimensional \CA\, and  let $\phi_0, \phi_1:F_n\to E_n$ be two unital \hm s.
Define 
$$
C_n=\{(c,a)\in C([0,1], E_n)\oplus A_n:  c(0)=\phi_0\circ \pi_\xi(a)\andeqn c(1)=\phi_1\circ \pi_\xi(a)\}.
$$

It is proved in \cite{GLN} (see Theorem 13.41 and Theorem 29.4 of \cite{GLN})  that every \CA\, in ${\cal N}_1^{\cal Z}$ can be written as 
simple inductive limit of \CA s of the form $C_n$ above (up to isomorphism). 
\end{rem}

\section{Approximate embeddings}

\begin{df}\label{DS}
Denote by ${\cal S}$ a class of unital \CA s which has the following properties:
(1) ${\cal S}$ contains all finite dimensional \CA s,
(2) tensor products of finite dimensional \CA s with   \CA s in ${\cal S}$  are in ${\cal S},$ 
(3) ${\cal S}$ is closed under direct sums,
(4) every \CA\, in ${\cal C}$ are weakly semiprojective,  and (5) if $S\in {\cal S},$ $J\subset {\cal S}$ is a closed two-sided 
ideal of $S,$ $\ep>0$ and ${\cal F}\subset S/I$ is a finite 
subset, then there exists \SCA\, $C\subset A/I$ such that $C\in {\cal S}$ and ${\rm dist}(x, C)<\ep\rforal 
x\in {\cal F}.$

It is proved in \cite{GLN} (see Lemma 3.20 in \cite{GLN}) that the class ${\cal C}$ in \ref{DclassC} satisfies (1)--(5). 
\end{df}

The following is proved in \cite{SW}.

\begin{prop}\label{LWS}{\rm{(cf. Lemma 2.1 \cite{SW})}}
Let ${\cal S}$ be a class of unital \CA s in \text{\ref{DS}} and let $U$ be a UHF-algebra of infinite type.
Let $A$ be a unital separable simple stably finite exact \CA.  Then $A\otimes U$ is $TA{\cal S}$ if and only if 
there is  $\eta>0$  such that, for any $\ep>0$ and any finite subset ${\cal F}\subset A\otimes U,$ there exists 
a projection $p$ and a \SCA\, $B\subset A\otimes U$ with $1_B=p$ and $B\in {\cal S}$  such that
\beq\label{LWS-1}
\tau(p)>\eta\rforal \tau\in T(A\otimes U),\\
\|px-xp\|<\ep\andeqn {\rm dist}(pap, B)<\ep\rforal x\in {\cal F}.
\eneq
\end{prop}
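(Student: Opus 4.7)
\textbf{The ``only if'' direction} is immediate from the definition. Fix a projection $e\in U$ of trace $1/4$ and set $a_0=1_A\otimes e\in(A\otimes U)_+\setminus\{0\}$. Since $A$ is exact and $U$ has a unique tracial state, every $\tau\in T(A\otimes U)$ evaluates $a_0$ as $1/4$, so $d_\tau(a_0)\le 1/4$. Applying the $TA{\cal S}$ property to $a_0$ with the given $\epsilon$ and ${\cal F}$ produces a projection $p$ and $B\in{\cal S}$ with the required approximate commutation and containment properties, together with $1-p\lesssim a_0$; the latter forces $\tau(1-p)\le d_\tau(a_0)\le 1/4$, so $\tau(p)\ge 3/4>1/2=:\eta$.

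\textbf{For the ``if'' direction,} assume the $\eta$-condition and fix $\epsilon>0$, a finite subset ${\cal F}\subset A\otimes U$, and $a\in(A\otimes U)_+\setminus\{0\}$; the task is to produce $p$ and $B$ meeting the definition of $TA{\cal S}$. Since $A\otimes U$ is simple, unital, separable, exact, and $U$-stable (hence ${\cal Z}$-stable), it has strict comparison of positive elements. Using compactness of $T(A\otimes U)$ and functional calculus on $a$, pick $\alpha>0$ with $d_\tau(a)>\alpha$ uniformly in $\tau$. By strict comparison it suffices to produce $p$ and $B\in{\cal S}$ as required but with the Cuntz condition $1-p\lesssim a$ strengthened to $\tau(1-p)<\alpha$ for all $\tau$. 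I achieve this iteratively: pick $m\in\N$ with $(1-\eta)^m<\alpha$ and build, by induction on $k$, projections $0=P_0<P_1<\cdots<P_m$ and subalgebras $B_k\subset P_k(A\otimes U)P_k$ in ${\cal S}$ with $1_{B_k}=P_k$, such that (i)~$\|[P_k,x]\|<\epsilon/2^{m-k}$ and $\dist(P_kxP_k,B_k)<\epsilon/2^{m-k}$ for $x\in{\cal F}$, and (ii)~$\tau(1-P_k)<(1-\eta)^k$ for all $\tau$. Then $p:=P_m$ and $B:=B_m$ meet all requirements.

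\textbf{The inductive step} $k\mapsto k+1$ applies the $\eta$-hypothesis inside the corner $D_k=(1-P_k)(A\otimes U)(1-P_k)$. This corner is unital (for $P_k\ne 1$), simple, separable, exact, and $U$-stable, since $U$-stability descends to hereditary subalgebras. Using $U\otimes U\cong U$ and an approximately inner implementation of the $U$-absorption isomorphism $A\otimes U\cong(A\otimes U)\otimes U$, one transfers the $\eta$-condition from $A\otimes U$ to $D_k$: this delivers a projection $p_{k+1}\in D_k$ with $\tau_k(p_{k+1})>\eta$ for the normalized trace $\tau_k$ on $D_k$, together with a subalgebra $B_{k+1}'\in{\cal S}$, $1_{B_{k+1}'}=p_{k+1}$, approximately commuting with $(1-P_k){\cal F}(1-P_k)$. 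Setting $P_{k+1}:=P_k+p_{k+1}$ and $B_{k+1}:=B_k\oplus B_{k+1}'$ (which lies in ${\cal S}$ by property~(3) of \ref{DS}), one computes $\tau(1-P_{k+1})=(1-\tau_k(p_{k+1}))\tau(1-P_k)<(1-\eta)^{k+1}$. The principal technical obstacle is justifying the transfer of the $\eta$-condition to the corner while simultaneously maintaining approximate commutation of the successive projections with the original ${\cal F}$ across $m$ iterations; weak semiprojectivity of ${\cal S}$ (property~(4) of \ref{DS}) is precisely what allows each freshly built $B_{k+1}'$ to be perturbed to sit exactly below $1-P_k$ and splice compatibly with $B_k$ while remaining in ${\cal S}$.
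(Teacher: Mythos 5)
Your ``only if'' direction and your reduction of the ``if'' direction to a trace estimate (via strict comparison, which $A\otimes U$ does enjoy as a simple unital exact ${\cal Z}$-stable algebra) are fine, apart from a small slip: a projection of trace exactly $1/4$ need not exist in $U$ (take $U=M_{3^\infty}$); you should just take any projection $e\in U$ of sufficiently small trace and use that every trace on $A\otimes U$ is of the form $\tau_A\otimes\tau_U$. The genuine gap is the inductive step. Your entire argument rests on ``transferring the $\eta$-condition'' to the corners $D_k=(1-P_k)(A\otimes U)(1-P_k)$, and this is asserted, not proved. The hypothesis is a property of the specific algebra $A\otimes U$ together with its tracial simplex; an arbitrary corner is a different unital algebra, and neither $U\otimes U\cong U$, nor an approximately inner implementation of the absorption isomorphism, nor weak semiprojectivity (which serves to splice approximately multiplicative or approximately commuting data into honest homomorphisms, not to move trace bounds into corners) gives any uniform $\eta$ for $D_k$. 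Indeed $D_k$ is $U$-absorbing and hence of the form $A'\otimes U$ only for $A'=D_k$ itself, for which no hypothesis is assumed; and if the $\eta$-condition passed automatically to corners, the whole proposition would be nearly trivial, whereas boosting the fixed lower bound $\eta$ to a complement of arbitrarily small trace is exactly the hard content of the result. The difficulty you flag as ``the principal technical obstacle'' is therefore not a bookkeeping issue to be handled by semiprojectivity; it is the theorem.

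For comparison, the paper does not iterate inside corners at all. It follows Lemma 2.1 of Strung--Winter (and Lemma 3.2 of Winter's earlier paper), where at each stage the hypothesis is applied globally to $A\otimes U$ with a finite set enlarged by $p_i$ and by a finite subset of the previously obtained algebra $B_i$; the stability statement quoted in the paper (valid for weakly semiprojective $B_i$, which is why finite generation can be dropped) then produces a genuine unital homomorphism $\varrho$ of $B_i$ into the complement corner of the new projection, close to the compression, and the new candidate algebra is $\varrho(B_i)\oplus F$. Since $\varrho(B_i)$ is only a quotient-type image and need not lie in ${\cal S}$, property (5) of \ref{DS} is invoked to replace it by an ${\cal S}$-subalgebra approximately containing the relevant elements; the trace-amplification machinery itself is imported unchanged from \cite{SW}. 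Your proposal neither reproduces that mechanism nor supplies a substitute for it, so as written the ``if'' direction does not go through.
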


\begin{proof}
This is a slight refinement of Lemma 2.1 of \cite{SW}. 
It should be noted, however, that finitely generated assumption on \CA s in ${\cal S}$ is not really needed. 
As in the proof of Lemma 3.2 of \cite{W1}, let $\gamma<\ep_{i+1}/14.$ 
Choose a subset ${\cal F}'\subset B_i$ such that, for each $x\in {\cal F},$ 
there exists $x'\in {\cal F}'$ such that $\|x-x'\|<\ep_i.$ 
 For any $\gamma>0,$  there exists  $\vartheta>0$  and a finite subset ${\cal G}\subset B_i$ with the following
 property: if $E$ is another \CA\, $p\in E$ is a projection and $\phi: B_i\to E$ is a \hm\,  satisfying 
 $\|p\phi(b)-\phi(b)p\|<\vartheta$ for all $b\in {\cal G},$ then there exists a unital \hm\, 
 ${\bar \phi}: B_i\to pEp$ such that $\|{\bar \phi}(x')-p\phi(x')p\|<\gamma$ for all $x'\in {\cal F}'.$ 
 
 Then, as in the proof of Lemma 2.1 of \cite{SW}, one choose $B_{i+1}':=\varrho(B_i)\oplus F.$
 By property (5) in \ref{DS}, there exists a unital \SCA\, $B_{i+1}''\in \varrho(B_i)$  with $B_{i+1}''\in {\cal S}$
 such that
 \beq\label{LSW-2}
 {\rm dist}(\varrho(x'), B_{i+1}'')<\gamma/2\rforal x'\in {\cal F}'.
 \eneq
 Put $B_{i+1}=B_{i+1}''\oplus F.$
 Then the rest of proof remains the same.
 
\end{proof}

We will also use the following characterization for \CA s in $TA{\cal S}.$

\begin{thm}\label{LW} {\rm (Theorem 2.2 of \cite{W13})}
Let ${\cal S}$ be a class of unital \CA s in \ref{DS} and let 
$A$ be a unital separable simple \CA\, with $T(A)\not=\emptyset$ and with finite 
nuclear dimension.  Suppose that there are two sequences of \morp s:  $\phi_n: A\to B_n$ and 
$h_n: B_n\to A$ satisfying the following:

{\rm (i)} $B_n\in {\cal S}$ for each  $n\in \N,$

{\rm (ii)} $h_n$ is an  embedding for  each $n\in \N,$

{\rm (iii)} $\lim_{n\to\infty}\|\phi_n(a)\phi_n(b)-\phi_n(ab)\|=0\rforal a, b\in A$ and

{\rm (iv)} $\lim_{n\to\infty}\sup_{\tau\in T(A)}|\tau\circ h_n\circ \phi_n(a)-\tau(a)|=0$ for each $a\in A.$

Then $A\otimes Q$ is $TA{\cal S}.$ 

\end{thm}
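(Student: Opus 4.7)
The plan is to verify the hypothesis of Proposition \ref{LWS}: exhibit a uniform constant $\eta>0$ such that, for every $\epsilon>0$ and every finite $\mathcal{F}\subset A\otimes Q$, one can produce a projection $p\in A\otimes Q$ and a C*-subalgebra $B\subset p(A\otimes Q)p$ with $B\in\mathcal{S}$ and $1_B=p$, satisfying $\tau(p)>\eta$ for all $\tau\in T(A\otimes Q)$ together with the usual approximate commutation $\|px-xp\|<\epsilon$ and containment $\text{dist}(pxp,B)<\epsilon$ for $x\in\mathcal{F}$.

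First I would exploit the identification $Q\cong Q\otimes Q$ to rewrite $A\otimes Q\cong A\otimes Q\otimes Q$ and, after a routine perturbation, assume $\mathcal{F}\subset A\otimes Q\otimes 1$. In the spare $Q$-factor, fix a unital copy of $M_N\subset Q$ (for $N$ large) and a minimal projection $e\in M_N$; set $p:=1-(1\otimes 1\otimes e)$, so that $\tau(p)=(N-1)/N>\tfrac12=:\eta$ in every tracial state. This uniform tracial bound is independent of $\epsilon$ and $\mathcal{F}$, and the target subalgebra $B$ will be constructed inside $p(A\otimes Q)p$ from the given factorization $(\phi_n,h_n)$.

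Next, for $n$ sufficiently large, condition (iii) makes $\phi_n$ approximately multiplicative on the finite set $\mathcal{F}_0\subset A$ of first-tensor-factors of (a refinement of) $\mathcal{F}$, while (iv) makes $h_n\circ\phi_n$ uniformly tracially $\epsilon$-close to $\text{id}_A$. By property (2) of \ref{DS}, $B_n\otimes M_{N-1}\in\mathcal{S}$. I would then use weak semiprojectivity within $\mathcal{S}$ (property (4), via the subclass $\mathcal{C}$) together with property (5) to promote the approximately multiplicative c.p.c.~map $\phi_n\otimes\text{id}_{Q}\otimes\text{id}_{M_{N-1}}$ to an honest unital $*$-homomorphism from some $B'\in\mathcal{S}$ (a small $\mathcal{S}$-perturbation of $B_n\otimes M_{N-1}$) into $p(A\otimes Q)p$; the image of this map becomes the desired $B$, and the embedding property of $h_n$ guarantees injectivity.

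The main technical obstacle is upgrading the purely tracial proximity provided by (iv) to the norm-level statement $\text{dist}(pxp,B)<\epsilon$ for $x\in\mathcal{F}$. This is where finite nuclear dimension of $A$ enters essentially: combined with $Q$-stability (and hence $\mathcal{Z}$-stability of $A\otimes Q$), it forces strict comparison of positive elements by traces, so that elements of tracially small norm become Cuntz-subequivalent to positive elements with small-trace support. Consequently, the tracially negligible discrepancy between $x$ and its image $h_n(\phi_n(\cdot))$-approximation can be absorbed into the small-trace complement $1-p$ via an orthogonal cutdown. Putting this together with the lift produced by weak semiprojectivity and with property (5) to ensure the resulting subalgebra genuinely lies in $\mathcal{S}$ yields the required $B\subset p(A\otimes Q)p$, completing the verification of the criterion in Proposition \ref{LWS}.
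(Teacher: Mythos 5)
The paper does not prove this statement at all: it is quoted verbatim as Theorem 2.2 of \cite{W13} (Winter), so any self-contained argument would have to reproduce the substance of Winter's proof. Your sketch does not do that, and it contains a genuine gap at exactly the point you flag as ``the main technical obstacle.'' Hypothesis (iv) says only that $\sup_{\tau\in T(A)}|\tau(h_n\phi_n(a))-\tau(a)|\to 0$, i.e.\ the two maps induce nearly the same tracial \emph{functionals}; it does \emph{not} say that $h_n\circ\phi_n$ is close to $\mathrm{id}_A$ in any trace norm, nor that $x-h_n\phi_n(x)$ is ``tracially negligible'' as an element. Consequently there is no small-trace positive element witnessing the discrepancy between $pxp$ and the candidate subalgebra, and strict comparison (which compares Cuntz classes of positive elements via traces) has nothing to act on. The step ``absorb the discrepancy into $1-p$ via an orthogonal cutdown'' therefore does not get off the ground, and this is precisely the content of Winter's theorem: converting agreement of traces into a norm-level tracial approximation requires the finite nuclear dimension in a structural way (order-zero decompositions of $\mathrm{id}_A$, uniqueness/comparison arguments for order-zero maps, and the UHF factor to glue the pieces), not merely the strict comparison it implies.

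Two further points. First, your choice $p=1-(1\otimes1\otimes e)$ in a spare $Q$-factor makes $\tau(p)$ large and the commutation with ${\cal F}$ trivial, but the corner $p(A\otimes Q)p$ is again just (a copy of) $A\otimes Q$, so nothing has been gained toward the only hard requirement, $\mathrm{dist}(pxp,B)<\epsilon$ with $B\in{\cal S}$; reducing to Proposition \ref{LWS} is a reasonable framing, but the uniform $\eta$ was never the difficulty. Second, the semiprojectivity step points the wrong way: $\phi_n$ maps \emph{out of} $A$ into $B_n$, so weakly semiprojective lifting of $\phi_n\otimes\mathrm{id}$ cannot produce a $*$-homomorphism from a member of ${\cal S}$ \emph{into} $p(A\otimes Q)p$. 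The natural such homomorphism is $h_n$ (suitably tensored), which is already exact and injective --- no lifting is needed --- but its image need not norm-approximately contain ${\cal F}$, and nothing in your argument addresses why, after conjugation or correction, it would. As written, the proposal assembles the right ingredients but does not bridge the tracial-to-norm gap that the theorem is about.
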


\begin{lem}\label{L1}
Let $C$ and $A$ be two unital separable simple amenable \CA s.
Suppose that $C\otimes U$ has finite  nuclear dimension and $gTR(A\otimes U)\le 1$ for some UHF-alegbra $U$ of infinite type. 
Suppose also that there exists an order isomorphism 
$$
\kappa_0: (K_0(C\otimes U), K_0(C\otimes U)_+, [1_{C\otimes U}])
\to (K_0(A\otimes U), K_0(A\otimes U)_+, [1_{A\otimes U}]),
$$ 
an affine homeomorphism 
$\lambda: T(A\otimes U)\to T(C\otimes U)$ such that $(\kappa_0, \lambda)$ is compatible and there exists 
a sequence of unital \morp s $\psi_n: C\otimes U\to A\otimes U$  such that
\beq\label{L1-1}
\lim_{n\to\infty}\sup_{\tau\in T(A}|\tau(\psi_n(a))-\lambda(\tau)(a)|=0\rforal a\in C\otimes U\andeqn\\
\lim_{n\to\infty}\|\psi_n(ab)-\psi_n(a)\psi_n(b)\|=0\rforal a, b\in C\otimes U.
\eneq
Then $gTR(C\otimes U)\le 1.$ 
\end{lem}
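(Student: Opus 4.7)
The plan is to verify the hypotheses of Theorem~\ref{LW} with $\mathcal{S}=\mathcal{C}$ applied to the \CA\, $C\otimes U$ (which has finite nuclear dimension by assumption and tracial state space $T(C\otimes U)$ identified with $T(A\otimes U)$ via $\lambda$), yielding $(C\otimes U)\otimes Q\in TA\mathcal{C}$, i.e.\ $gTR((C\otimes U)\otimes Q)\leq 1$. Since $U$ has infinite type, $U\otimes U\cong U$; combining this with Corollary~29.3 of \cite{GLN}, which upgrades $TA\mathcal{C}$ to $TA\mathcal{C}_0$ after tensoring with $Q$, and the UHF-invariance of $TA\mathcal{C}_0$ from \cite{LS} recorded in \ref{DN1}, one concludes that $(C\otimes U)\otimes U\cong C\otimes U$ is $TA\mathcal{C}_0$, whence $gTR(C\otimes U)\leq 1$.

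\textbf{Constructing the CPC maps $\phi_n$.} Fix an increasing sequence $\{{\cal F}_n\}$ of finite subsets of $C\otimes U$ with dense union and tolerances $\varepsilon_n\downarrow 0$. Since $gTR(A\otimes U)\leq 1$ and $U$ has infinite type, $A\otimes U$ is $TA\mathcal{C}$. Applying the refined decomposition in Lemma~\ref{LWS} to the approximately multiplicative set $\psi_n({\cal F}_n)\subset A\otimes U$, I obtain projections $p_n\in A\otimes U$ and subalgebras $B_n\subset A\otimes U$ with $B_n\in\mathcal{C}$, $1_{B_n}=p_n$, $\tau(p_n)\to 1$ uniformly for $\tau\in T(A\otimes U)$, $\|p_n\psi_n(a)-\psi_n(a)p_n\|<\varepsilon_n$, and $\dist(p_n\psi_n(a)p_n,B_n)<\varepsilon_n$ for $a\in{\cal F}_n$. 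Define $\phi_n\colon C\otimes U\to B_n$ as the composition of $\psi_n$ (followed by cutting down by $p_n$) with a CPC approximate retraction $p_n(A\otimes U)p_n\to B_n$, which is available by weak semiprojectivity of $\mathcal{C}$-algebras. Then $\phi_n$ is CPC with $\|\phi_n(a)\phi_n(b)-\phi_n(ab)\|\to 0$ for all $a,b\in C\otimes U$.

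\textbf{Constructing $h_n$---the main obstacle---and verifying traces.} The heart of the argument is to produce an \emph{embedding} $h_n\colon B_n\hookrightarrow C\otimes U$ realizing the $K$-theoretic data $\kappa_0^{-1}\circ(\iota_n)_*$, where $\iota_n\colon B_n\hookrightarrow A\otimes U$ is the inclusion, together with the tracial data $\tau\mapsto\lambda^{-1}(\tau)|_{B_n}$ for $\tau\in T(C\otimes U)$. For this I would invoke the existence theorems refined in Section~4 of this note (building on \cite{GLN} and \cite{LNjfa}), which produce \hm s from Elliott--Thomsen algebras into unital simple targets with prescribed compatible $KK$- and tracial data; injectivity of $h_n$ can be arranged by combining the simplicity and $\mathcal{Z}$-stability of $C\otimes U$ (which follows from finite nuclear dimension) with the weak semiprojectivity of $B_n$, which permits the perturbation of an approximate realization to an honest \hm. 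With $h_n$ constructed, for $a\in{\cal F}_n$ and $\tau\in T(C\otimes U)$ one computes
\[
\tau(h_n(\phi_n(a)))\;\approx\;\lambda^{-1}(\tau)(\phi_n(a))\;\approx\;\lambda^{-1}(\tau)(p_n\psi_n(a)p_n)\;\approx\;\lambda^{-1}(\tau)(\psi_n(a))\;\longrightarrow\;\tau(a),
\]
where the first $\approx$ uses the trace-compatibility of $h_n$, the third uses $\tau(p_n)\to 1$, and the final limit is the hypothesis $\tau'(\psi_n(a))\to\lambda(\tau')(a)$ applied with $\tau'=\lambda^{-1}(\tau)$. Thus the hypotheses of Theorem~\ref{LW} hold, giving $(C\otimes U)\otimes Q\in TA\mathcal{C}$, and the absorption step in the first paragraph completes the proof.
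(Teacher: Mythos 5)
Your overall architecture (reduce to $U=Q$, build approximately multiplicative cpc maps $\phi_n$ into algebras $B_n\in{\cal C}$ sitting inside $A\otimes U$, then find embeddings $h_n: B_n\to C\otimes U$ approximately inverting the tracial data, and finish with Theorem \ref{LW}) matches the paper's proof. The genuine gap is in the step you yourself call ``the main obstacle'': the construction of the embeddings $h_n: B_n\hookrightarrow C\otimes U$. You propose to invoke the existence theorems of Section 4 (and of \cite{GLN}, \cite{LNjfa}), which produce homomorphisms with prescribed $KK$/tracial data \emph{into targets of the form $A_1\otimes U$ with $A_1\in{\cal N}_1$} (or at least into algebras already known to be rationally $TA{\cal C}$); they say nothing about maps into $C\otimes U$, because $C\otimes U$ is exactly the algebra whose classifiability ($gTR\le 1$) the lemma is trying to establish. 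All that is known about the codomain at this stage is that it is unital, separable, simple, amenable, with finite nuclear dimension (hence ${\cal Z}$-stable and of stable rank one). No $KK$-existence theorem with such weak hypotheses on the target is available, so as written this step is circular, and the trace-compatibility $\lambda^{-1}(\tau)\circ\phi_n\approx\tau\circ h_n\circ\phi_n$ that your estimate relies on has no source either.

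The paper gets around this precisely by avoiding $KK$-existence theorems altogether: since one may take $U=Q$, Corollary 29.3 of \cite{GLN} lets one choose $B_n\in{\cal C}_0$ (trivial $K_1$), and then the existence part of Robert's classification \cite{Rob} of homomorphisms from one-dimensional NCCW complexes with trivial $K_1$ into stable rank one algebras applies. The required Cuntz-semigroup morphism is assembled from the given data: by \cite{BT}, $\text{Cu}(A\otimes U)=V(A\otimes U)\sqcup{\rm LAff}_+(T(A\otimes U))$, and $\kappa_0^{-1}$ together with $\lambda$ defines an ordered semigroup isomorphism $\Gamma:\text{Cu}(A\otimes U)\to\text{Cu}(C\otimes U)$; Robert's theorem then produces $h_n$ with $h_n^{cu}=\Gamma\circ j_n^{cu}$ ($j_n$ the inclusion $B_n\subset A\otimes U$), injectivity follows from the non-vanishing of this Cuntz morphism (not from weak semiprojectivity, as you suggest), and the tracial compatibility needed in your displayed estimate is automatic from the ${\rm LAff}_+$ component of $\Gamma$. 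So the restriction to ${\cal C}_0$ rather than ${\cal C}$ is not cosmetic: it is what makes the only applicable existence tool (Cuntz-semigroup classification into a merely stable-rank-one target) available. Unless you replace your appeal to Section 4/\cite{GLN} existence theorems by an argument of this kind, the proof does not go through.
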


\begin{proof}
By \cite{LS}, it suffices to show that $gTR(A\otimes Q)\le 1.$
Since $U\otimes Q\cong Q$ and the assumption holds when we replace $U$ by $Q,$ 
we may assume that $U=Q.$  
Let $C_1=C\otimes U$ and $A_1=A\otimes U.$ 
For unital separable simple \CA s with finite nuclear dimension,  we use a characterization 
for $TA{\cal C}_0$ in  \cite{W13}.
We will show that there exists a sequence of  unital \CA s 
$B_n\in {\cal C}_0$  
such that and a sequence of monomorphisms $h_n: B_n\to C_1$ such that 
\beq\label{L1-3}
\lim_{n\to\infty}\sup_{\tau\in T(C_1)}|\tau(h_n\circ \psi_n(c))-\tau(c)|=0\rforal c\in C_1.
\eneq
Then, by   \ref{LW} (Theorem 2.2 in \cite{W13}), $gTR(C_1)\le 1.$

By the assumption, we have 
\beq\label{L1-4}
r_{A_1}(\tau)(\kappa_0(x))=r_{C_1}(\lambda(\tau))(x)\tforal x\in K_0(C_1).
\eneq

Let $\{{\cal G}_n\}$ be an increasing sequence of finite subsets of $A_1$ such that
$\cup_{n=1}^{\infty}{\cal G}_n$ is dense in $A_1.$  
For each $n,$ there exists a \SCA\, $B_n\subset A_1$ with $1_{B_n}=p_n$ such that 
$B_n\in {\cal C}_0$ and a \morp\, $L_n: A_1\to B_n$ such that
\beq\label{L1-6}
\|p_nx-xp_n\|<1/2^{n+3}\tforal x\in {\cal G}_n,\\\label{MTT1-5+1}
\|L_n(x)-p_nxp_n\|<1/2^{n+3}\tforal x\in {\cal G}_n\andeqn\\\label{MTT1-5+2}
\tau(1-p_n)<{1\over{2^{n+3}\max\{\|g\|+1: g\in {\cal G}_n\}}}\rforal \tau\in T(A_1).
\eneq
In particular, 
\beq\label{L1-10+}
\lim_{n\to\infty}\|L_n(ab)-L_n(a)L_n(b)\|=0\rforal a,b\in A_1.
\eneq

It follows from Theorem 2.5 of \cite{BT} that $\text{Cu}(A_1)=V(A_1)\sqcup {\rm LAff}_+(T(A_1)).$
Since both $A_1$ and $C_1$ have stable rank one, 
the map $\Gamma: \text{Cu}(A_1)\to \text{Cu}(C_1)$
defined by 
\beq\label{L1-7}
\Gamma([p])=\kappa_0^{-1}([p])\andeqn \Gamma(f)(\lambda(\tau))=f(\tau)
\eneq
for all projections $p\in A_1\otimes {\cal K},$ $f\in {\rm LAff}_+(T(A_1))$ and 
$\tau\in T(A_1)$ is a semigroup isomorphism.
Moreover $\Gamma$  is order preserving, preserves the suprema and preserves 
the  relation of compact containment. 
Denote by $j_n: B_n\to A_1$ the embedding
and denote by $j_m^{cu}: \text{Cu}(B_n)\to \text{Cu}(A_1)$ the morphism induced by $j_m.$
It follows from the  existence part of a result in \cite{Rob} that 
there is, for each $n,$ a \hm\, $h_n: B_n\to C_1$ such that
$h_n^{cu}=\Gamma\circ j_m^{cu}.$ Since $j_m$ is the embedding and 
$\Gamma$ is an isomorphism, $h_n^{cu}$ does not vanish. It follows 
that $h_n$ is an embedding. 

Put $\psi_n=L_n\circ \psi_n'.$ 
Let $\{{\cal F}_n\}$ be an increasing sequence of finite subsets of $C_1$ whose union is dense in $C_1.$
By  passing to a subsequence,  \wtlog\, we may assume 
that, for each $n,$ 
\beq\label{L1-8}
&&\hspace{-0.4in}\sup_{\tau\in T(A_1)}|\tau\circ \psi_n(c)-\lambda(\tau)(c)|<{1\over{2^{n+1}(\max\{\|c\|+1: c\in {\cal F}_n\})}} \rforal c\in {\cal F}_n\andeqn\\
&&{\rm dist}(\psi_n'(c), {\cal G}_n)<1/2^{n+2}\rforal c\in {\cal F}_n.
\eneq
Let $g_c\in {\cal G}_n$ such that 
\beq\label{MTT1-7+}
\|\psi_n'(c)-g_c\|<1/2^{n+3}\rforal c\in {\cal F}_n.
\eneq
We then estimate that
\beq\label{L1-9}
\lambda(\tau)(h_n\circ \psi_n(c))&=&\Gamma\circ j_m^{cu}(\psi_n(c))(\tau)=\widehat{j_m(\psi_n(c))}(\tau)\\.
&=& \tau(\psi_n(c))\approx_{1/2^{n+3}} \tau(L_n(g_c))\approx_{1/2^{n+3}}\tau(p_ng_cp_n)\\
&\approx_{1/2^{n+3}}&\tau(g_c)\approx_{1/2^{n+3}}\tau(\psi_n'(c))\\
& \approx_{1/2^{n+3}}&\lambda(\tau)(c)
\eneq
for all $\tau\in T(A_1),$  where  the first approximation follows from (\ref{MTT1-7+}), 
the second follows from (\ref{MTT1-5+1}), the third follows from (\ref{MTT1-5+2}), the forth follows from (\ref{MTT1-7+})
 and  the last one follows (\ref{L1-8}).
In other words,
\beq\label{MTT1-9}
\sup_{t\in T(C_1)}|t(h_n\circ \psi_n(c))-t(c)|<1/2^n\rforal c\in C_1.
\eneq

Then, by applying   Theorem \ref{LW},  $gTR(C_1)\le 1.$ 

\end{proof}

 \section{Uniqueness and existence theorems}
 
 The following follows from a result in \cite{GLN}. 
  
 \begin{thm}\label{MUN2}
Let $X$ be a compact metric space, let $C=C(X),$ let $A_1\in {\cal A}_1,$ let
$U$  be a UHF-algebra of infinite type and
let  $A=A_1\otimes U.$  Suppose that
$\phi_1, \phi_2: C\to A$ are two unital monomorphisms.
Suppose also that
\beq\label{MUN2-1}
[\phi_1]=[\phi_2]\,\,\,{\rm in}\,\,\, KL(C,A)\\
(\phi_1)_T=(\phi_2)_T\andeqn \phi_1^{\ddag}=\phi_2^{\ddag}.
\eneq
Then $\phi_1$ and $\phi_2$ are approximately unitarily equivalent.
\end{thm}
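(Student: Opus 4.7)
The strategy is to apply the general uniqueness theorem for unital monomorphisms into a classifiable target from \cite{GLN} directly. First, I would verify that $A$ is in the scope of that theorem. Since $A_1\in{\cal A}_1$ and $U\otimes U\cong U,$ the algebra $A=A_1\otimes U$ satisfies $gTR(A\otimes U)\le 1,$ so $A\in{\cal N}_1^{\cal Z}$ and in particular $A$ is $TA{\cal C}_0,$ has stable rank one, and has strict comparison. By Theorem 2.5 of \cite{BT} the Cuntz semigroup splits as $\text{Cu}(A)=V(A)\sqcup{\rm LAff}_+(T(A)),$ so agreement of the $\phi_j^{cu}$ follows automatically from $[\phi_1]=[\phi_2]$ in $KL$ together with $(\phi_1)_T=(\phi_2)_T.$ Hence the full Cuntz-semigroup, $KL,$ trace, and $U/CU$ invariant of $\phi_1$ and $\phi_2$ already agree under the stated hypotheses.

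Since the source $C=C(X)$ is commutative and satisfies the UCT but is not itself in the building-block class ${\cal C}_0,$ I would then reduce to maps from ${\cal C}_0$-algebras by approximation. Given a finite subset ${\cal F}\subset C$ and $\ep>0,$ a partition-of-unity argument produces a quotient $\pi_\ep: C\to C_\ep$ onto a commutative $C_\ep\in{\cal C}_0$ (a finite CW-complex) and a section ${\cal F}\ni x\mapsto \tilde x\in C_\ep$ with $\|\pi_\ep(\tilde x)-x\|<\ep.$ Weak semiprojectivity of ${\cal C}_0$ (property (4) of \ref{DS}) allows one to lift each $\phi_j$ along $\pi_\ep$ to a unital \hm\, $\psi_{j,\ep}: C_\ep\to A$ with $\|\psi_{j,\ep}\circ\pi_\ep(x)-\phi_j(x)\|<\ep$ on ${\cal F}.$ Naturality of the invariants under $\pi_\ep$ ensures that $\psi_{1,\ep}$ and $\psi_{2,\ep}$ inherit matching $(KL,T,U/CU)$-data, so the uniqueness theorem in \cite{GLN} for unital monomorphisms from ${\cal C}_0$ into a unital simple $TA{\cal C}_0$ algebra yields a unitary $u_\ep\in A$ with $\|u_\ep^*\psi_{1,\ep}(c)u_\ep-\psi_{2,\ep}(c)\|<\ep$ for $c\in \pi_\ep({\cal F}).$ Combining this with the approximate factorizations gives $\|u_\ep^*\phi_1(x)u_\ep-\phi_2(x)\|<3\ep$ on ${\cal F},$ and passage to $\ep\to 0$ with an exhaustion by finite subsets produces the desired approximate unitary equivalence.

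The main obstacle is carrying the $U/CU$ invariant faithfully through the approximation: this invariant encodes the determinant/rotation obstruction and is the most delicate of the three pieces of data, being neither $K$-theoretic nor tracial in the strict sense. One needs the approximating factorizations $\psi_{j,\ep}$ to be chosen so that $\psi_{1,\ep}^{\ddag}$ and $\psi_{2,\ep}^{\ddag}$ agree on the relevant finite subset of $U(M_\infty(C_\ep))/CU(M_\infty(C_\ep)),$ and this rests on the naturality of the $U/CU$ functor under the surjection $\pi_\ep$ together with the asymptotic-unitary-equivalence and determinant-rotation machinery developed in \cite{GLN}, building on \cite{Lninv} and \cite{LNjfa}. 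Once those tools are in place, the uniqueness theorem is a direct corollary, which is consistent with the author's remark that this statement ``follows from a result in \cite{GLN}.''
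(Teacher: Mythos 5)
There is a genuine gap, and in fact the core of your argument goes in the wrong direction. Your reduction step asks for a surjection $\pi_\ep\colon C(X)\to C_\ep$ onto a commutative member of ${\cal C}_0$ together with approximate factorizations $\psi_{j,\ep}\circ\pi_\ep\approx\phi_j$ on a finite set. Weak semiprojectivity does not provide this: it lifts maps \emph{defined on} a weakly semiprojective algebra along quotient maps of the \emph{target}, whereas you are trying to push $\phi_j$ forward through a quotient of the \emph{source}. Such a factorization would force $\phi_j$ to be approximately zero on $\ker\pi_\ep\cap{\cal F}$, which is impossible for monomorphisms once ${\cal F}$ meets the kernel; and quotients of $C(X)$ are just restrictions to closed subsets of $X$, not finite CW approximations (the standard approximation goes the other way, $C(X)=\lim_n C(X_n)$ with maps $C(X_n)\to C(X)$, as in Proposition \ref{pullback}). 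Moreover, commutative algebras in ${\cal C}_0$ have spectrum a finite union of points and arcs, with $K_1=0$ and torsion-free $K_0$, so the hypotheses $[\phi_1]=[\phi_2]$ in $KL$ and $\phi_1^{\ddag}=\phi_2^{\ddag}$ — precisely the data the theorem is about — would be invisible after your proposed reduction; ``naturality of the invariants'' cannot transfer information that dies in $C_\ep$. Your preliminary observations about $\mathrm{Cu}(A)$ are correct but play no role.

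The paper's proof is quite different and much shorter: the relevant result is the uniqueness theorem (Theorem 12.7 of \cite{GLN}) for unital maps from $C(X)$ itself into $A=A_1\otimes U$, whose only hypothesis beyond agreement of $KL$, traces and $U/CU$ is a uniform fullness condition, namely a non-decreasing map $\Delta\colon C(X)_+^{q,{\bf 1}}\setminus\{0\}\to(0,1)$ with $\tau(\phi_i(a))\ge\Delta(\hat a)$ for all $\tau\in T(A)$. This is the one thing that must be verified, and you never address it. The verification is immediate: set $\Delta(\hat a)=\inf\{\tau(\phi_1(a)):\tau\in T(A)\}/2$, which is strictly positive because $\phi_1$ is injective, $A$ is simple (so traces are faithful) and $T(A)$ is compact; since $(\phi_1)_T=(\phi_2)_T$, both maps satisfy the bound, and Theorem 12.7 of \cite{GLN} then gives the approximate unitary equivalence directly, with no factorization through building blocks needed.
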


\begin{proof}
This follows  immediately from  Theorem 12.7 of \cite{GLN}. 
Define, for each $a\in A_+^{\bf 1}\setminus \{0\},$
\beq\label{MUN2-2}
\Delta(\hat{a})=\inf\{\tau(\phi_1(a)):\tau\in T(A)\}/2.
\eneq
It is clear that $\Delta: A_+^{q, {\bf 1}}\setminus \{0\}\to (0,1)$ is non-decreasing map.
We 
\beq\label{MUN2-3}
\tau(\phi_2(a))=\tau(\phi_1(a))\ge \Delta(\hat{a})\rforal a\in A_+^{\bf 1}\setminus \{0\}.
\eneq
Then the theorem  follows   Theorem 12.7  of \cite{GLN}.

\end{proof}

\begin{df}\label{dfchi}
 {\rm 
 Let $A$ be a unital separable stably finite \CA\, with $T(A)\not=\emptyset.$
 There is a splitting exact sequence with the splitting map $J_c^A:$
 \beq\label{dfchi-1}
 0\to {\rm Aff}(T(A))/\overline{\rho_A(K_0(A))}\to U_{\infty}(A)/CU_{\infty}(A)\stackrel{\pi_{K_1}^A}{\to} K_1(A)\to 0.
 \eneq
 (see \cite{KT}). 
In particular, $\pi_{K_1}^A\circ J_c^A={\rm id}_{K_1(A)}.$ In what follows, {\it for each such unital \CA\, $A,$ 
we fix one $J_c^A.$ }
 
For a fixed pair of unital stable finite \CA s $C$ and $A$ with $T(C)\not=\emptyset$ and 
$T(A)\not=\emptyset,$  we fix $J_c^C$ and $J_c^A.$
Suppose that $\phi: C\to A$ is a unital \hm.  
Define  $\phi^{\rho}: K_1(C)\to \Aff(T(A))/\overline{\rho_A(K_0(A))}$ by 
\beq\label{Dphirho}
\phi^{\rho}=({\rm id}-J_c^A\circ \pi_{K_1}^A)\circ \phi^{\ddag}\circ J_c^C.
\eneq
Note that $\pi_{K_1}^A\circ \phi^{\rho}=0.$ 
Moreover
\beq\label{Dphirho-2}
\pi_{K_1}^A\circ \phi^{\ddag}\circ J_c^C=\phi_{*1}.
\eneq
Suppose that $\psi: C\to A$ is another unital \hm\, such that
$\psi_{*1}=\phi_{*1}$ and $\phi^{\rho}=\psi^{\rho}.$ 
Then, by (\ref{Dphirho-2}),
\beq\label{Dphirho-3}
\phi^{\ddag}\circ J_c^C=\psi^{\ddag}\circ J_c^C.
\eneq
Thus, if $\psi_T=\phi_T,$ then $\phi^{\ddag}=\psi^{\ddag}.$ 

Suppose that 
$\kappa\in KL_e(C,A)^{++},$ $\lambda: T(A)\to T(C)$ is an affine continuous map and
$\gamma: U(M_{\infty}(C))/CU(M_{\infty}(C))\to U(M_{\infty}(A))/CU(M_{\infty}(A))$ 
is a continuous \hm. We say $(\kappa, \lambda, \gamma)$ is compatible, 
if $(\kappa|_{K_0(C)}, \lambda)$ is compatible,  
$\gamma|_{\Aff(T(C))/\overline{\rho_A(K_0(C))}}=\overline{\lambda_{\sharp}},$ 
where $\overline{\lambda}_{\sharp}$ is defined in \ref{DEll}, and $\kappa|_{K_1(C)}=\pi_{K_1}^A\circ \gamma\circ J_c^C$
(which is independent of choice of $J_c^C$).

}
\end{df}

Therefore we also have the following:

\begin{cor}\label{UniCtoAC}
Let $X$ be a compact metric space, let $C=C(X),$ let $A_1\in {\cal A}_1,$ let
$U$  be a UHF-algebra of infinite type and
let  $A=A_1\otimes U.$  Suppose that
$\phi_1, \phi_2: C\to A$ are two unital monomorphisms.
Suppose also that
\beq\label{MUN2-1}
[\phi_1]=[\phi_2]\,\,\,{\rm in}\,\,\, KL(C,A)\\
(\phi_1)_T=(\phi_2)_T\andeqn \phi_1^{\rho}=\phi_2^{\rho}.
\eneq
Then $\phi_1$ and $\phi_2$ are approximately unitarily equivalent.
\end{cor}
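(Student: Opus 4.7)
The plan is to reduce the corollary directly to Theorem \ref{MUN2} by upgrading the hypothesis $\phi_1^\rho=\phi_2^\rho$ to the stronger statement $\phi_1^\ddag=\phi_2^\ddag$, using exactly the machinery already set up in \ref{dfchi}. All the conceptual work has been done there; only a two-line bookkeeping argument is needed.

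First, I would extract the $K_1$-equality. Since $[\phi_1]=[\phi_2]$ in $KL(C,A)$, the induced maps on $K_1$ coincide, so $(\phi_1)_{*1}=(\phi_2)_{*1}$. Combined with $\phi_1^\rho=\phi_2^\rho$, identity \eqref{Dphirho-2} together with the splitting formula
\[
\phi_i^\ddag\circ J_c^C \;=\; J_c^A\circ \pi_{K_1}^A\circ \phi_i^\ddag\circ J_c^C \;+\; (\mathrm{id}-J_c^A\circ\pi_{K_1}^A)\circ\phi_i^\ddag\circ J_c^C \;=\; J_c^A\circ(\phi_i)_{*1}+\phi_i^\rho
\]
yields $\phi_1^\ddag\circ J_c^C=\phi_2^\ddag\circ J_c^C$, which is precisely the conclusion drawn in the paragraph following \eqref{Dphirho-3}.

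Next, I would use the hypothesis $(\phi_1)_T=(\phi_2)_T$. This implies $\overline{(\phi_1)_T^\sharp}=\overline{(\phi_2)_T^\sharp}$ on $\Aff(T(C))/\overline{\rho_C(K_0(C))}$, and the restriction of $\phi_i^\ddag$ to this subgroup of $U_\infty(C)/CU_\infty(C)$ is exactly this induced affine map. Hence $\phi_1^\ddag$ and $\phi_2^\ddag$ agree on $\Aff(T(C))/\overline{\rho_C(K_0(C))}$. Together with the previous paragraph they agree on $J_c^C(K_1(C))$, and by the splitting exact sequence \eqref{dfchi-1} these two subgroups generate all of $U_\infty(C)/CU_\infty(C)$, so $\phi_1^\ddag=\phi_2^\ddag$.

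Finally, with $[\phi_1]=[\phi_2]$ in $KL(C,A)$, $(\phi_1)_T=(\phi_2)_T$, and now $\phi_1^\ddag=\phi_2^\ddag$ in hand, all three hypotheses of Theorem \ref{MUN2} are verified, and the approximate unitary equivalence of $\phi_1$ and $\phi_2$ follows immediately. There is no real obstacle here: the corollary is essentially a translation between two equivalent packagings of the unitary-commutator-group data, and the nontrivial content (the uniqueness theorem itself) is imported wholesale from Theorem 12.7 of \cite{GLN} via \ref{MUN2}.
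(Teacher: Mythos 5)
Your proposal is correct and follows exactly the route the paper intends: the remarks after (\ref{Dphirho-2}) and (\ref{Dphirho-3}) in \ref{dfchi} already record that $(\phi_1)_{*1}=(\phi_2)_{*1}$ and $\phi_1^{\rho}=\phi_2^{\rho}$ force $\phi_1^{\ddag}\circ J_c^C=\phi_2^{\ddag}\circ J_c^C$, and that adding $(\phi_1)_T=(\phi_2)_T$ gives $\phi_1^{\ddag}=\phi_2^{\ddag}$, after which Theorem \ref{MUN2} applies. No discrepancy with the paper's argument.
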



 The following is well-known.
 
 \begin{prop}\label{pullback}
 Let $X$ be a compact metric space and let $X_n$ be a sequence of polyhedrons such that
 $C(X)=\lim_{n\to\infty} (C(X_n), s_n).$ Then, for any $\ep>0$ and any finite subset 
 ${\cal F}\subset C(X),$ there exists an integer $k_1\ge 1$  such that, 
 for any $n\ge k$ there is a  unital $\ep$-${\cal F}$-multiplicative \morp s $L_n: C(X)\to C(X_n)$
 such that 
 \beq\label{pullback-1}
\|s_{n, \infty}\circ L_n(f)-f\|<\ep \tforal f\in {\cal F}.
 \eneq
 \end{prop}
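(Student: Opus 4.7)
The plan is to dualize and build $L_n$ from a sufficiently fine partition of unity on $X_n$, with the fineness controlled through the inverse limit topology on $X$. Each $s_n$ comes from a continuous map $\pi_n:X_{n+1}\to X_n$, so $X=\varprojlim(X_n,\pi_n)$ with coordinate projections $\pi_{n,\infty}:X\to X_n$. Replacing each $X_n$ by the image $\pi_{n,\infty}(X)$ at the outset if necessary (the rest of the argument uses only that $X_n$ is a compact metric space), I may assume $\pi_{n,\infty}$ is surjective.

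By uniform continuity of the finite set ${\cal F}$, fix an open cover ${\cal U}$ of $X$ on which every $f\in{\cal F}$ oscillates by less than $\epsilon/(4M)$, with $M=1+\max_{f\in{\cal F}}\|f\|$; then take a star-refinement ${\cal U}'$ of ${\cal U}$. The standard refinement property for inverse limits of compact Hausdorff spaces yields $k_1$ and an open cover $\{V_\beta\}_{\beta=1}^m$ of $X_{k_1}$ whose pullback $\{\pi_{k_1,\infty}^{-1}(V_\beta)\}$ refines ${\cal U}'$. For any $n\ge k_1$, pulling $\{V_\beta\}$ back along the connecting map $t_{k_1,n}:X_n\to X_{k_1}$ gives an open cover $V_\beta^{(n)}:=t_{k_1,n}^{-1}(V_\beta)$ of $X_n$ with $\pi_{n,\infty}^{-1}(V_\beta^{(n)})=\pi_{k_1,\infty}^{-1}(V_\beta)\subset U'_\beta$ for some $U'_\beta\in{\cal U}'$. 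Fix a subordinate partition of unity $\{\psi_\beta^{(n)}\}\subset C(X_n)$ and choose $x_\beta\in\pi_{n,\infty}^{-1}(V_\beta^{(n)})\subset U'_\beta$; set
$$
L_n(f)\;:=\;\sum_{\beta=1}^m f(x_\beta)\,\psi_\beta^{(n)}\;\in\;C(X_n).
$$
This is manifestly unital and completely positive.

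For the approximation $\|s_{n,\infty}(L_n(f))-f\|<\epsilon$, expand $s_{n,\infty}(L_n(f))(x)-f(x)=\sum_\beta(f(x_\beta)-f(x))\psi_\beta^{(n)}(\pi_{n,\infty}(x))$: a nonzero summand forces $x\in\pi_{n,\infty}^{-1}(V_\beta^{(n)})\subset U'_\beta$, so $x$ and $x_\beta$ lie in the same $U'_\beta$ and the oscillation bound supplies the estimate. For approximate multiplicativity in $C(X_n)$, use $\sum_\beta\psi_\beta^{(n)}=1$ to rewrite
$$
L_n(f)L_n(g)-L_n(fg)=\sum_{\alpha,\beta}f(x_\alpha)\bigl(g(x_\beta)-g(x_\alpha)\bigr)\psi_\alpha^{(n)}\psi_\beta^{(n)}.
$$
At $y\in X_n$ where $\psi_\alpha^{(n)}(y)\psi_\beta^{(n)}(y)\neq 0$ one has $y\in V_\alpha^{(n)}\cap V_\beta^{(n)}$; surjectivity of $\pi_{n,\infty}$ produces $z\in\pi_{n,\infty}^{-1}(V_\alpha^{(n)}\cap V_\beta^{(n)})\subset U'_\alpha\cap U'_\beta$, so the star-refinement property places both $U'_\alpha$ and $U'_\beta$ inside a single $U\in{\cal U}$. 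Hence $x_\alpha,x_\beta\in U$ and $|g(x_\alpha)-g(x_\beta)|<\epsilon/(4M)$, which controls the remaining sum.

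The main delicate point is exactly the appeal to surjectivity of $\pi_{n,\infty}$ in the multiplicativity estimate: without it, the sets $V_\alpha^{(n)}\cap V_\beta^{(n)}$ could meet inside $X_n\setminus\pi_{n,\infty}(X)$ with no corresponding point of $X$ to feed into the star-refinement, and the choice of $x_\alpha,x_\beta$ in unrelated preimages could not be compared. The preliminary reduction to the image $\pi_{n,\infty}(X)\subset X_n$ neutralizes this, since the rest of the argument needs only compactness and metrizability of each $X_n$, not its polyhedral structure.
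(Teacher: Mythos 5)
Your partition-of-unity construction is sound \emph{when the induced maps $\pi_{n,\infty}:X\to X_n$ are surjective}, but the opening reduction ``replacing each $X_n$ by the image $\pi_{n,\infty}(X)$\dots I may assume $\pi_{n,\infty}$ is surjective'' is a genuine gap, not a harmless normalization. The proposition asks for unital $\ep$-${\cal F}$-multiplicative maps $L_n:C(X)\to C(X_n)$ into the \emph{given} algebras of the \emph{given} inductive system (and these must be polyhedra for the way the proposition is used later). After your replacement you only produce maps into $C(Y_n)$ with $Y_n=\pi_{n,\infty}(X)$, i.e.\ into the quotient $C(X_n)/I_n$, where $I_n=\ker s_{n,\infty}$ is the ideal of functions vanishing on $Y_n$. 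There is no multiplicativity-preserving way to pass back: composing with a positive unital extension operator $C(Y_n)\to C(X_n)$ (Dugundji) or with a Choi--Effros lift gives a map whose multiplicativity defect is small only \emph{modulo} $I_n$, and its norm in $C(X_n)$ can be of order $1$ (already for $Y=\{0,1\}\subset[0,1]$ the extension of $f^2$ differs from the square of the extension by $1/4$). This is exactly the off-image overlap problem you yourself flag as the ``delicate point'': keeping the codomain $C(X_n)$, the sets $V_\alpha\cap V_\beta$ can meet outside $Y_n$, and neither shrinking the cover nor adding $X_n\setminus Y_n$ as an extra patch (needed for unitality of the partition of unity) restores control there. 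So as written the argument proves the statement only under the extra hypothesis that the $s_{n,\infty}$ are injective.

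The missing ingredient is the device the paper uses to absorb exactly this defect: anything killed by $s_{n,\infty}$ becomes small at a later finite stage, since $\|s_{n,\infty}(a)\|=\lim_m\|s_{n,m}(a)\|$; this is why the conclusion is phrased as ``there exists $k_1$ such that for all $n\ge k$\dots'' rather than stage by stage. Concretely, the paper takes (by amenability of $C(X)$) a unital \morp\ $\Phi:C(X)\to B:=s_{k_1,\infty}(C(X_{k_1}))$ moving ${\cal F}$ little, lifts $B\cong C(X_{k_1})/I_{k_1}$ to $C(X_{k_1})$ by a unital completely positive splitting, and then composes with $s_{k_1,m}$ for $m$ large to make the (a priori only $I_{k_1}$-valued) multiplicativity defect small in norm; your construction could be repaired the same way, or by first invoking Freudenthal--Marde\v{s}i\'{c} to choose the system with injective $\iota_{n,\infty}$ --- which is in fact the only case the paper later needs, and where your more explicit, purely topological argument (star-refinements plus point evaluations) is a perfectly good alternative to the paper's amenability/lifting argument. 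But as a proof of the proposition as stated, the surjectivity reduction does not go through.
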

 
 \begin{proof}
 For any $\ep>0$ and any finite subset ${\cal F}\subset C(X),$ there 
 is an integer $k_1\ge 1$ such that 
 ${\rm dist}(f, s_{n, \infty}(C(X_n))<\ep/2$ for all $n\ge k_1.$
 To simplify the notation, without loss of generality, we may assume that
 $\|f\|\le 1$ for all $f\in {\cal F}.$  
 Put $B_n=s_{n, \infty}(C(X_n)).$ 
 Since $C(X)$ is amenable, there exists a unital \morp\, 
 $\Phi_n: C(X)\to B_n$ such that
 \beq\label{pullback-2}
 \|\Phi_n(f)-f\|<\ep/4\rforal f\in {\cal F}.
 \eneq
 
 Since $s_{n, \infty}(C(X_n))$ is amenable, there exists a unital \morp\, $\Psi_n: s_{n, \infty}(C(X_n))\to C(X_n)$ such that
 \beq\label{pullback-3}
 s_{n, \infty}\circ \Psi_n(f)=f
 \eneq
 for all $f\in s_{n, \infty}(C(X_n)).$
 For all $f, g\in {\cal F},$ since 
 $$
 s_{n, \infty}\circ \Psi_n(fg)=fg=s_{n, \infty}\circ \Psi_n(f)\cdot s_{n, \infty}\circ \Psi_n(g)=s_{n, \infty}(\Psi_n(f)\Psi_n(g)),
 $$
 there is an integer $k_2>n$ such that
 \beq\label{pullback-10}
 \|s_{n, m}\circ \Psi_n(fg)-s_{n,m}\circ \Psi_n(f)s_{n,m}\circ \Psi_n(g)\|<\ep/2
 \eneq
 for all $m\ge k_2$ and $f, g\in {\cal F}.$
 Put $k=k_1k_2.$  For $m\ge k,$
 define $L_m=s_{k_1+1, m}\circ \Psi_{k_1+1}\circ \Phi_{k_1+1}: C(X)\to C(X_m).$ $m=1,2,....$
 It follows from (\ref{pullback-2}) and (\ref{pullback-3}) that 
 $L_m$ is $\ep$-${\cal F}$-multiplicative. Moreover, by (\ref{pullback-2}) and (\ref{pullback-3}),
 $$
 \|s_{m, \infty}\circ L_m(f)-f\|<\ep\rforal f\in {\cal F}.
 $$
 
 \end{proof}

 \begin{lem}\label{EXpre}
 Let $X$ be a compact metric space,  let $A\in {\cal N}_1,$ let $U$ be a UHF-algebra of infinite type
 and let $B=A\otimes U.$ 
 Suppose that $\kappa\in KL_e(C(X), B)^{++},$ $\lambda: T(B)\to T_{\bf f}(C(X))$ is a 
 continuous affine map and suppose that $\gamma: U_{\infty}(C(X))/CU_{\infty}(C(X))\to U_{\infty}(A)/CU_{\infty}(B)$ 
 is a continuous \hm\, such that
 $(\kappa, \lambda, \gamma)$ is compatible. 
 Then there exists a  sequence of unital \morp s $\phi_n: C(X)\to B$ such 
 that, for any finite subset ${\cal P}\subset \underline{K}(C(X)),$ 
 \beq\label{ExCpre-1}
 &&[\phi_n]|_{\cal P}=\kappa|_{\cal P} \,\,\,{\it for\,\, all \,\,\, sufficiently \,\,\, large}\,\,\, n,\\
 &&\lim_{n\to\infty}\|\phi_n(fg)-\phi_n(f)\phi_n(g)\|=0\tforal f,\, g\in C(X),\\
 &&\lim_{n\to\infty}\max_{\tau\in T(A)}|\tau\circ \phi_n(f)-\lambda(\tau)(f)|=0\tforal f\in C(X)_{s.a.}\tand\\\label{ExCpre-1+}
 &&\lim_{n\to\infty}{\rm dist}(\overline{\langle \phi_n(v)\rangle}, \gamma({\bar v}))=0 \tforal 
 { unitaries}\,\,\,v\in M_m(C(X)), \,\,m=1,2,....
 \eneq
 \end{lem}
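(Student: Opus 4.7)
The strategy is to factor the desired approximate embedding through polyhedral approximations of $X$, and then invoke the existence theorem of \cite{GLN} on each polyhedron. Write $X$ as an inverse limit of finite polyhedra so that $C(X)=\varinjlim (C(X_n), s_n)$. For an increasing sequence of finite subsets ${\cal F}_n \subset C(X)$ with dense union and tolerances $\varepsilon_n \to 0$, Proposition \ref{pullback} produces (after relabelling) unital $\varepsilon_n$-${\cal F}_n$-multiplicative \morp s $L_n : C(X) \to C(X_n)$ with $\|s_{n,\infty}\circ L_n(f) - f\| < \varepsilon_n$ for $f \in {\cal F}_n$; hence $\|L_n(fg)-L_n(f)L_n(g)\| \to 0$ and $s_{n,\infty}\circ L_n(f) \to f$ in norm for every $f, g \in C(X)$.

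Next, pull back the given data along $s_{n,\infty}$: set $\kappa_n := \kappa \circ [s_{n,\infty}] \in KL_e(C(X_n), B)^{++}$ (using the UCT for $C(X_n)$), $\lambda_n := (s_{n,\infty})_T \circ \lambda : T(B) \to T(C(X_n))$, and $\gamma_n := \gamma \circ s_{n,\infty}^{\ddag}$. Compatibility of $(\kappa,\lambda,\gamma)$ transfers to $(\kappa_n,\lambda_n,\gamma_n)$. Since $A\in{\cal N}_1$ and $U$ is a UHF-algebra of infinite type, $B = A \otimes U$ satisfies $gTR(B) \le 1$ and is $TA{\cal C}_0$ (Cor.\ 29.3 of \cite{GLN}); in particular $B \in {\cal N}_1^{\cal Z}$. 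Because $C(X_n)$ has finitely generated $K$-theory and finitely generated Hausdorffized $K_1$-data, the existence theorem of \cite{GLN} (the companion of the uniqueness result \ref{MUN2}) applied with domain $C(X_n)$ and target $B \in {\cal N}_1^{\cal Z}$ yields a unital monomorphism $\psi_n : C(X_n) \to B$ realizing the triple exactly: $[\psi_n] = \kappa_n$ in $KL$, $(\psi_n)_T = \lambda_n$, and $\psi_n^{\ddag} = \gamma_n$.

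Set $\phi_n := \psi_n \circ L_n : C(X) \to B$. This is a unital \morp, approximately multiplicative because $L_n$ is and $\psi_n$ is a genuine $*$-homomorphism. For any finite ${\cal P} \subset \underline{K}(C(X))$, continuity of $\underline{K}$ under inductive limits lets us realize ${\cal P}$ as the image under $(s_{n,\infty})_*$ of a finite subset ${\cal P}_n \subset \underline{K}(C(X_n))$ for large $n$; the partial $KL$-class of $L_n$ on ${\cal P}$ then agrees with $(s_{n,\infty})_*^{-1}|_{\cal P}$ once the multiplicativity tolerance is small enough (the standard ``partial $KL$-class of an approximately multiplicative map'' argument), so $[\phi_n]|_{\cal P} = [\psi_n] \circ [L_n]|_{\cal P} = \kappa|_{\cal P}$. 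For traces,
\[
\tau(\phi_n(f)) = \tau(\psi_n(L_n(f))) = \lambda_n(\tau)(L_n(f)) = \lambda(\tau)(s_{n,\infty}\circ L_n(f)) \longrightarrow \lambda(\tau)(f),
\]
uniformly in $\tau \in T(B)$ since $s_{n,\infty} \circ L_n(f) \to f$ in norm. For the $CU$-estimate, given a unitary $v \in M_m(C(X))$, perturb $L_n(v)$ to a genuine unitary $v_n \in M_m(C(X_n))$ with $s_{n,\infty}(v_n) \to v$ in norm; then $\overline{\langle \phi_n(v)\rangle}$ is close to $\overline{\langle \psi_n(v_n)\rangle} = \gamma_n(\bar v_n)$, and continuity of $\gamma$ together with $\gamma_n = \gamma \circ s_{n,\infty}^{\ddag}$ forces $\gamma_n(\bar v_n) \to \gamma(\bar v)$.

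The main obstacle is the construction of $\psi_n$ in the middle step: one must simultaneously prescribe the $KL$-class, the tracial map, and the Hausdorffized $K_1$-data in a compatible way, and realize them exactly by a single monomorphism from a finite-polyhedral domain. This is precisely the content of the existence half of the classification theorem for targets in ${\cal N}_1^{\cal Z}$ and relies on the fact that $B$ is $TA{\cal C}_0$. Once $\psi_n$ is in hand, the rest of the argument is standard gluing via polyhedral approximation and continuity of the $K$-theoretic, tracial, and $CU$-invariants.
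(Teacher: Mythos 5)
Your proposal is correct and takes essentially the same route as the paper: realize $C(X)$ as an inductive limit of $C(X_n)$ over polyhedra, pull back $(\kappa,\lambda,\gamma)$ to compatible data on each $C(X_n)$, invoke the existence theorem of \cite{GLN} (Theorem 21.14 there) to get unital monomorphisms $h_n\colon C(X_n)\to B$ realizing the data exactly, and set $\phi_n=h_n\circ L_n$ with the $L_n$ from Proposition \ref{pullback}. The only point the paper makes explicit that you leave implicit is that the inverse limit is chosen so that the connecting maps $\iota_{n,\infty}\colon C(X_n)\to C(X)$ are injective, which is what guarantees each $\lambda_n$ lands in $T_{\bf f}(C(X_n))$ and each $\kappa_n$ stays in $KL_e(C(X_n),B)^{++}$, as required to apply the existence theorem and obtain monomorphisms.
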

 (See 14.5 and 2.20 of \cite{Lnmem4} for notations in (\ref{ExCpre-1+}).)
 \begin{proof}
 We first note that,  for any compact metric space $Y,$ $T_{\bf f}(C(Y))\not=\emptyset.$ 
This is well known, but to see this, let  $\{y_n\}$ be a dense sequence in $Y.$ 
Define 
\beq\label{EXpre-1}
t(f)=\sum_{n=1}^{\infty} f(y_n)/2^n\rforal f\in C(Y).
\eneq
 It is clear that $t$ gives a faithful tracial state on $C(Y).$ 
 
 We will write $C(X)=\lim_{n\to\infty}(C(X_n), \iota_n),$ where each $X_n$ is  a polyhedron and 
 $\iota_{n, \infty}: C(X_n)\to C(X)$ is injective (see Satz 1. p. 229 of \cite{Freu} and see also 
 \cite{Mar}).   Put $C=C(X)$ and $C_n=C(X_n),$ $n=1,2,....$ 

Put $\kappa_n=\kappa\circ [\iota_{n, \infty}]$ and $\gamma_n=\gamma\circ \iota^{\ddag}.$
Define $\lambda_n: T(A)\to T_{\bf f}(C_n)$  by $\lambda_n(\tau)(f)=\lambda(\tau)(\iota_{n, \infty}(f))$
for all $f\in C_n$ and $\tau\in T(A),$ $n=1,2,....$  Note that $(\kappa_n, \lambda_n, \gamma_n)$ is compatible 
since $(\kappa, \lambda, \gamma)$ is compatible. Note also that since 
$\kappa\in KL_e(C, A)^{++}$ and $\iota_{n, \infty}$ is injective, $\kappa_n\in KL_e(C_n, A)^{++}.$
It follows from Theorem 21.14 of \cite{GLN} that there exists a unital monomorphism 
$h_n: C_n\to A$ such that
\beq\label{Expre-2}
[h_n]=\kappa_n,\,\,\, (h_n)_T=\lambda_n\andeqn h_n^{\ddag}=\gamma_n,\,\,\,n=1,2,....
\eneq

By applying \ref{pullback}, we may assume that there are \morp s $L_n: C(X)\to C(X_n)$ such that
\beq\label{Expre-3}
\lim_{n\to\infty}\|L_n(f)L_n(g)-L_n(fg)\|=0\rforal f, g\in C\andeqn\\
\lim_{n\to\infty}\|\iota_{n, \infty}\circ L_n(f)-f\|=0\rforal f\in C.
\eneq
Define $\phi_n=h_n\circ L_n,$ $n=1,2....$ One easily checks that $\{\phi_n\}$ meets the requirements.

 \end{proof}

 \begin{NN}\label{Strictp}
 
 {\rm
 Let $A$ be a unital separable stably finite simple \CA\, with $T(A)\not=\emptyset$ and let 
 $X$ be a compact metric space. Suppose that $\lambda: T(A)\to T_{\bf f}(C(X))$ is an affine continuous map.
 Define $\lambda_\sharp: C(X)_{s.a.}\to \Aff(T(A))$ by 
 $\lambda_{\sharp} (f)(\tau)=\lambda(\tau)(f)$ for all $f\in C(X)_{s.a.}.$ Since $\lambda(\tau)\in T_{\bf f}(C(X)),$
 $\lambda_\sharp$ is strictly positive in the sense that $\lambda_\sharp(f)>0$ for all $f\in C(X)_+\setminus \{0\}.$ 
 Put 
 \beq\label{Strictp-1}
 \Delta_0(\hat{f})=\inf\{\lambda_\sharp(f)(\tau): \tau\in T(A)\}
 \eneq
 for all $f\in C(X)_+^{\bf 1}.$ Note that, since $T(A)$ is compact, $1\ge \Delta_0(\hat{f})>0$ for 
 all $f\in C(X)_+^{\bf 1}.$ Thus $\Delta=3\Delta_0/4$ gives 
 an non-decreasing function from $C(X)_+^{q, \bf 1}\setminus\{0\}$ to $(0,1).$ 
 
 For any finite subset 
 ${\cal H}_0\subset C(X)_+^{q, {\bf 1}},$ there exists $\sigma>0$ and  a finite subset 
 ${\cal H}\subset C(X)_{s.a.}$ satisfying the following:
 for any unital \morp\, $\Phi: C(X)\to A$ 
 \beq\label{Strictp-2}
 \tau\circ \Phi(g)\ge \Delta(\hat{g})\rforal g\in {\cal H}_0,
 \eneq
 provided that
 \beq\label{Strictp-3}
 \max_{\tau\in T(A)}|\tau\circ \Phi(f)-\lambda(\tau)(f)|<\sigma\tforal f\in {\cal H}.
 \eneq
 
 }
 
 \end{NN}

 \begin{thm}\label{ExC}
 Let $X$ be a compact metric space, let $A\in {\cal N}_1$ and let $B=A\otimes U,$ where 
 $U$ is a UHF-algebra of infinite type.
 Suppose that $\kappa\in KL_e(C(X), B)^{++},$ $\lambda: T(A)\to T_{\bf f}(C(X))$ is a 
 continuous affine map and suppose that  $\chi: K_1(C(X))\to \Aff(T(B))/{\overline{\rho_A(K_0(B))}}$ is a \hm\, such that
 $(\kappa, \lambda)$ is compatible. 
 Then there exists a unital monomorphism $h: C(X)\to B$ such 
 that
 \beq\label{ExC-1}
 [h]=\kappa,\,\,\,h^{\rho}=\chi\andeqn  h_T=\lambda.
 \eneq
  \end{thm}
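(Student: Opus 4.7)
The approach is to combine the approximately multiplicative maps produced in Lemma~\ref{EXpre} with a one-sided Elliott intertwining driven by the uniqueness Theorem~\ref{MUN2}. The first step is to promote the partial datum $\chi$ to a full map on unitaries. Using the splittings $J_c^{C(X)}$ and $J_c^B$ fixed in \ref{dfchi}, define
\beq
\gamma: U_\infty(C(X))/CU_\infty(C(X)) \to U_\infty(B)/CU_\infty(B)
\eneq
to equal $\overline{\lambda_\sharp}$ on $\Aff(T(C(X)))/\overline{\rho_{C(X)}(K_0(C(X)))}$ and to send $J_c^{C(X)}(\alpha)$ to $J_c^B(\kappa|_{K_1(C(X))}(\alpha)) + \chi(\alpha)$. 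A direct computation using (\ref{Dphirho}) then shows that $(\kappa, \lambda, \gamma)$ is compatible in the sense of \ref{dfchi} and that any unital \hm\ $h: C(X) \to B$ with $h^{\ddag} = \gamma$ automatically satisfies $h^\rho = \chi$.

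Next, following the construction used in the proof of \ref{EXpre}, write $C(X) = \lim_n (C(X_n), \iota_n)$ with $X_n$ polyhedra and $\iota_{n, \infty}$ injective, and pull back $(\kappa, \lambda, \gamma)$ to compatible data $(\kappa_n, \lambda_n, \gamma_n)$ on $C(X_n)$; faithfulness of $\lambda_n$ is preserved because $\iota_{n, \infty}$ is injective. Theorem~21.14 of \cite{GLN} then supplies unital monomorphisms $h_n: C(X_n) \to B$ with $[h_n] = \kappa_n$, $(h_n)_T = \lambda_n$, $h_n^{\ddag} = \gamma_n$. Both $h_n$ and $h_{n+1} \circ \iota_n$ are unital monomorphisms $C(X_n) \to B$ carrying identical $KL$-classes, tracial maps, and $U_\infty/CU_\infty$ data, so by \ref{MUN2}/\ref{UniCtoAC} they are approximately unitarily equivalent.

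Fix an increasing sequence of finite subsets ${\cal F}_n \subset C(X_n)$ whose images $\iota_{n, \infty}({\cal F}_n)$ exhaust a dense subset of $C(X)$, together with tolerances $\epsilon_n > 0$ satisfying $\sum_n \epsilon_n < \infty$. A standard one-sided intertwining produces unitaries $u_n \in B$ such that the maps $\tilde h_n := \mathrm{Ad}(u_1 u_2 \cdots u_n) \circ h_n : C(X_n) \to B$ satisfy $\|\tilde h_{n+1}(\iota_n(c)) - \tilde h_n(c)\| < \epsilon_n$ for all $c \in {\cal F}_n$. Setting $h(\iota_{n, \infty}(c)) := \lim_{m \to \infty} \tilde h_m(\iota_{n, m}(c))$ and extending by density gives a unital $\ast$-homomorphism $h: C(X) \to B$ with $[h] = \kappa$, $h_T = \lambda$, and $h^{\ddag} = \gamma$, whence $h^\rho = \chi$. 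Injectivity is automatic: for $f \in C(X)_+ \setminus \{0\}$ and any $\tau \in T(B)$, $\tau(h(f)) = \lambda(\tau)(f) > 0$ because $\lambda(\tau)$ is faithful.

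The main obstacle is the intertwining step: one must verify that the inner-automorphism adjustments preserve all three invariants (automatic, since inner automorphisms act trivially on $KL$, on $T$, and on $U_\infty/CU_\infty$) and that the point-norm limit transports the $U_\infty/CU_\infty$ datum intact, which holds because $h^{\ddag}$ is continuous in that topology on unital homomorphisms and the relation $h_n^{\ddag} = \gamma_n$ is built from closed quotients.
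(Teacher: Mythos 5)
Your proposal is correct, but it takes a genuinely different route from the paper's. You keep the exact monomorphisms $h_n\colon C(X_n)\to B$ supplied by Theorem 21.14 of \cite{GLN} on the polyhedral approximants (the same existence input that the paper uses inside Lemma \ref{EXpre}), check that $h_n$ and $h_{n+1}\circ\iota_n$ carry the same $KL$-class, trace map and $U_\infty/CU_\infty$ data (this works because $\iota_{n+1,\infty}\circ\iota_n=\iota_{n,\infty}$, and both maps are injective since $\iota_{n,\infty}$ is), invoke the exact uniqueness theorem \ref{MUN2}/\ref{UniCtoAC}, and run a one-sided Elliott intertwining along the system $C(X_n)$. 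The paper instead composes the $h_n$ with almost-multiplicative maps $L_n\colon C(X)\to C(X_n)$ (Proposition \ref{pullback}) to get approximately multiplicative maps $\phi_n\colon C(X)\to B$ realizing $(\kappa,\lambda,\gamma)$ approximately, and then intertwines consecutive $\phi_n$ directly via the approximate-map version of the uniqueness theorem (12.7 of \cite{GLN}), with the $\Delta$-fullness bookkeeping of \ref{Strictp}. Your route avoids that $\delta$-${\cal G}$-multiplicative and fullness bookkeeping, but in exchange you must verify that the invariants pass to the limit along the inductive system: that inner automorphisms and point-norm limits preserve the $T$- and $U_\infty/CU_\infty$-data (as you note), and, for $[h]=\kappa$, that agreement of $[h]\circ[\iota_{n,\infty}]$ with $\kappa\circ[\iota_{n,\infty}]$ for every $n$ forces equality in $KL(C(X),B)$ — which holds since $\underline{K}(C(X))=\lim_n\underline{K}(C(X_n))$ and $KL=\Hom_\Lambda(\underline{K}(\cdot),\underline{K}(\cdot))$ under the UCT; you gloss this but it is standard. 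Your reduction of $h^\rho=\chi$ to $h^{\ddag}=\gamma$ via the splitting $J_c$ is exactly the paper's (implicit) step, and the faithfulness argument for injectivity of $h$ matches the paper's. Both arguments rest on the same two inputs from \cite{GLN} (existence 21.14 and uniqueness 12.7); the difference is only where the intertwining is performed.
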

 (Here $J_c^{C(X)}$ and $J_c^B$ are fixed.)
 \begin{proof}
 Let $\Delta$ be induced by $\lambda$ as defined in \ref{Strictp}. 
 Define $\gamma: U(M_{\infty}(C(X))/CU(M_{\infty}(C(X))\to U(M_{\infty}(B))/CU(M_{\infty}(B))$ as follows:
 \beq\label{ExC-1gamma}
 \gamma|_{\Aff(T(C(X)))/\overline{\rho_{C(X)}(K_0(C(X)))}}=\overline{\lambda_{\sharp}}\andeqn\\
 \gamma|_{J_c^{C(X)}(K_0(C(X)))}=\chi\circ \pi_{K_1}^{C(X)}+ J_c^A\circ \kappa\circ \pi_{K_1}^{C(X)}.
 \eneq
 Then $(\kappa, \lambda,\gamma)$ is compatible. 
 
 Let $\{{\cal F}_n\}$ be an increasing sequence of finite subsets of $C(X)$ such that
 its union is dense in $C(X).$ Let $\{\ep_n\}$ be a decreasing sequence of positive numbers such that
 $\sum_{n=1}^{\infty} \ep_n<\infty.$ 
 We will apply  12.7 of \cite{GLN} with $C=C(X).$  Let ${\cal H}_{1,n}\subset C(X)_+^{q, {\bf 1}}\setminus \{0\}$
 (in place of ${\cal H}_1$) be a finite subset, $\sigma_{1,n}>0$ (in place of $\gamma_1$) , 
 $\sigma_{2,n}>0,$  $\dt_n>0$ (in place of $\dt$), ${\cal G}_n\subset C(X)$ (in place of ${\cal G}$) be a finite subset,
 ${\cal P}_n\subset \underline{K}(C(X))$(in place of ${\cal G}$) be a finite subset, 
 ${\cal H}_{2, n}\subset C(X)_{s.a.}$ (in place of ${\cal H}_2$) be a finite subset, 
 ${\cal U}_n\subset U_{\infty}(C(X))/CU_{\infty}(C(X))$(in place of ${\cal U}$) for which 
 $[{\cal U}_n]\subset {\cal P}_n$ be a finite subset required by 12.7 of \cite{GLN} for 
 $\ep_n$ and ${\cal F}_n,$ $n=1,2,....$ 
 We may assume that $\{{\cal H}_{1,n}\}$ and  $\{{\cal P}_n\}$ are increasing.
 By applying \ref{EXpre}, one obtains a sequence of unital \morp s $\phi_n: C(X)\to B$
 such that $\phi_n$ is $\dt_n$-${\cal G}_n$-multiplicative, 
 \beq\label{EXC-6}
 [\phi_n]|_{{\cal P}_n}&=&[\kappa]|_{{\cal P}_n},\\
 \max_{\tau\in T(A)}|\tau\circ \phi_n(f)-\lambda(\tau)(f)|&<&\sigma_{1,n}\rforal f\in {\cal H}_{2,n}\andeqn\\
 {\rm dist}(\overline{\langle \phi_n(v) \rangle}, \gamma({\bar v}))&<&\sigma_{2,n}\rforal v\in {\cal U}_n,
 \eneq
 $n=1,2,....$
 By \ref{Strictp}, we may also assume that
 \beq\label{EXC-6+}
 \tau\circ \psi_n(g)\ge \Delta(\hat{g})\rforal g\in {\cal H}_{1,n}.
 \eneq
 It follows from 12.7 of \cite{GLN} that there exists a sequence of unitaries $u_n\in B$ such that
 \beq\label{ExC-7}
 \|{\rm Ad}\, u_n\circ \phi_{n+1}(f)-\phi_n(f)\|<\ep_n\tforal f\in {\cal F}_n,\,\,\, n=1,2,....
 \eneq
 Define $\psi_1=\phi_1,$ $\psi_2={\rm Ad}\, u_1\circ \phi_1,$
 $\psi_3={\rm Ad}\, u_2\circ \psi_2,$ and $\psi_{n+1}={\rm Ad}\, u_n\circ \psi_n,$ $n=1,2,....$
 Then, by (\ref{ExC-7}), 
 \beq\label{ExC-8}
 \|\psi_{n+1}(f)-\psi_n(f)\|<\ep_n\rforal f\in {\cal F}_n,\,\,\, n=1,2,....
 \eneq
 Since $\sum_{n=1}^{\infty}\ep_n<\infty,$   $\{{\cal F}_n\}$ is an increasing sequence and 
 $\cup_{n=1}^{\infty}{\cal F}_n$ is dense in $C(X),$ it is easy to see that
 $\{\psi_n(f)\}$ is Caucy for every $f\in C(X).$ Let $h(f)=\lim_{n\to\infty}\psi_n(f)$ for $f\in C(X).$
 It is clear that $h: C(X)\to B$ is a unital \hm. Moreover, 
 \beq\label{ExC-9}
 [h]=\kappa\,\,\,{\rm in}\,\,\, KL(C(X), B),\,\,\, h_T=\lambda \andeqn h^{\ddag}=\gamma.
 \eneq
Since $\gamma(\tau)\in T_{\bf f}(C(X))$ for each $\tau\in T(B),$ $h$ is also injective.

 \end{proof}

\section{The main results}


We begin with  the following lemma.

\begin{lem}\label{Traceunif}
Let $C$ be a unital separable amenable \CA\, and let $\af\in Aut(C)$ be such that
$\tau(c)=\tau(E(c))$ for all $c\in C\rtimes_\af\Z$ and for all $\tau\in T(C\rtimes_\af\Z),$ where 
$E$ is the canonical conditional expectation.
Let  $A$ be a unital  \CA\, with $T(A)\not=\emptyset.$ 
Suppose that $\lambda: T(A)\to T(C\rtimes_\af\Z)$ is a surjective continuous affine map. 
Suppose $\phi: C\to A$ is a unital monomrophism and $\psi_n: C\rtimes_\af\Z\to A$ is  a 
 \morp, $n=1,2,...,$ such that
\beq\label{Traceun-1}
\lim_{n\to\infty}\|\psi_n(a)\psi_n(b)-\psi_n(ab)\|&=&0\tforal\, a, b\in C\rtimes_\af\Z, \\\label{Traceun-1+1}
\lim_{n\to\infty}\|\psi_n(c)-\phi(c)\|&=&0\tforal c\in C
\andeqn\\
\tau\circ \phi(a)&=&\lambda(\tau)(a)\tforal a\in C\andeqn \tau\in T(A).
\eneq
Then, for any $\ep>0$ and any finite subset set ${\cal F}\subset C\rtimes_\af\Z,$ there exists $N\ge 1$ such that
\beq\label{Traceun-2}
\sup_{\tau\in T(A)} |\tau\circ \psi_n(a)-\lambda(\tau)(a)|<\ep\tforal a\in {\cal F}
\eneq
and for all $n\ge N.$
\end{lem}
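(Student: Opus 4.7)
The plan is to prove the uniform convergence by contradiction, combining weak-$*$ compactness of $T(A)$ and of the state space of $C\rtimes_\af\Z$ with the rigidity hypothesis that every trace on $C\rtimes_\af\Z$ factors through the canonical conditional expectation $E.$

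Suppose the conclusion fails. Then there are $\ep>0,$ a finite ${\cal F},$ indices $n_k\to\infty,$ $a_k\in{\cal F},$ and $\tau_k\in T(A)$ with $|\tau_k(\psi_{n_k}(a_k))-\lambda(\tau_k)(a_k)|\ge\ep.$ Since ${\cal F}$ is finite and $T(A)$ is weak-$*$ compact, I pass to a subsequence along which $a_k=a$ is fixed and $\tau_k\to\tau_*$ in $T(A);$ continuity of $\lambda$ then gives $\lambda(\tau_k)\to\lambda(\tau_*)$ in $T(C\rtimes_\af\Z).$ I then consider the positive functionals $\omega_k:=\tau_k\circ\psi_{n_k}$ on $C\rtimes_\af\Z.$ Each $\omega_k$ has norm at most $1$ since $\psi_{n_k}$ is c.p.c., and $\omega_k(1)\to\tau_*(\phi(1_C))=1$ by (\ref{Traceun-1+1}) applied to $1_C$ (using $\phi(1_C)=1_A$). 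Passing to a further subsequence, weak-$*$ compactness gives $\omega_k\to\mu$ for some state $\mu$ on $C\rtimes_\af\Z.$ The goal is then to identify $\mu=\lambda(\tau_*),$ for then $\omega_k(a)\to\lambda(\tau_*)(a)$ and $\lambda(\tau_k)(a)\to\lambda(\tau_*)(a)$ together contradict the original inequality.

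The identification of $\mu$ proceeds in three short steps. First, $\mu$ is tracial: approximate multiplicativity (\ref{Traceun-1}) together with the trace property of $\tau_k$ on $A$ gives $\mu(xy)=\lim\tau_k(\psi_{n_k}(x)\psi_{n_k}(y))=\lim\tau_k(\psi_{n_k}(y)\psi_{n_k}(x))=\mu(yx)$ for all $x,y\in C\rtimes_\af\Z.$ Second, for $c\in C$ the norm convergence $\psi_{n_k}(c)\to\phi(c)$ yields $|\omega_k(c)-\tau_k(\phi(c))|\le\|\psi_{n_k}(c)-\phi(c)\|\to 0,$ so $\mu(c)=\tau_*(\phi(c))=\lambda(\tau_*)(c),$ i.e. $\mu|_C=\lambda(\tau_*)|_C.$ Third, since $\mu$ and $\lambda(\tau_*)$ are both tracial states on $C\rtimes_\af\Z,$ the standing hypothesis forces $\mu=\mu\circ E$ and $\lambda(\tau_*)=\lambda(\tau_*)\circ E;$ combined with agreement on $C$ this gives $\mu(x)=\mu(E(x))=\lambda(\tau_*)(E(x))=\lambda(\tau_*)(x)$ on all of $C\rtimes_\af\Z.$

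The principal obstacle is checking that the weak-$*$ limit $\mu$ is a genuine tracial state rather than merely a bounded positive functional, so that the factorization-through-$E$ hypothesis applies. This is guaranteed by the asymptotic unitality $\psi_{n_k}(1_C)\to 1_A$ (delivering $\mu(1)=1$) and by approximate multiplicativity combined with the tracial property of the $\tau_k$ (delivering the trace property of $\mu$). Once these are in hand, the compactness/diagonal argument immediately produces the contradiction, and no delicate norm estimates inside $C\rtimes_\af\Z$ itself are required; in particular, surjectivity of $\lambda$ plays no role in this step (it is presumably used when invoking this lemma).
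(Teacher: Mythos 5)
Your proof is correct and follows essentially the same route as the paper's: argue by contradiction, use weak-$*$ compactness to extract a limit point of the functionals $\tau_k\circ\psi_{n_k}$, note it is a tracial state by approximate multiplicativity, and exploit the hypothesis that every trace on $C\rtimes_\af\Z$ factors through $E$ together with the norm convergence $\psi_n\to\phi$ on $C.$ The only cosmetic difference is that you also pass to a weak-$*$ limit $\tau_*$ of the $\tau_k$ and invoke continuity of $\lambda$ to identify the limit functional as $\lambda(\tau_*),$ whereas the paper instead compares $\tau_k\circ\psi_{n_k}(c)$ with $\tau_k\circ\psi_{n_k}(E(c))$ directly (and, as you observe, neither argument actually uses surjectivity of $\lambda$).
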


\begin{proof}
We first show that, for any $\tau\in T(A),$   
\beq\label{Trun-0-n1}
\lim_{n\to\infty}|\tau\circ \psi_n(c)-\lambda(\tau)(c)|=0\tforal c\in C\rtimes_\af\Z.
\eneq
Fix $\tau\in T(A).$ 
If (\ref{Trun-0-n1}) fails, then there exists at least one $c\not=0$ in $C\rtimes_\af\Z,$ one   $\tau\in T(C\rtimes_\af\Z)$ and a subsequence 
$\{n_k\}$ such that
\beq\label{Trun-0-n1+}
\lim\inf_n|\tau\circ \psi_{n_k}(c)-\lambda(\tau)(c)|=\eta>0.
\eneq
Since the state space  of $C\rtimes_\af\Z$ is weak*-compact, one can choose a limit point $t$
of $\{\tau\circ \psi_{n_k}\}.$ Then there exists 
a sequence $\{n_k'\}\subset \{n_k\}$ such that 
\beq\label{Trun-0-n2}
\lim_{k\to\infty}|\tau\circ \psi_{n_k'}(c)-t(c)|=0\tforal c\in C.
\eneq
By (\ref{Traceun-1}), $t$ is a tracial state. 
Let $E: C\to C(X)$ be the canonical conditional expectation.
Then, by the assumption,  $t(c)=t(E(c))$ for all $c\in C.$  By combing with (\ref{Trun-0-n2}),
\beq\label{Trun-0-n3}
\lim_{n\to\infty}|\tau\circ \psi_{n_k'}(c)-\tau\circ \psi_{n_k'}(E(c))|=0\rforal c\in C.
\eneq
However, by (\ref{Traceun-1+1}) and by (\ref{Trun-0-n3})
\beq\label{Trun-0-n4}
\lim_{n\to\infty}|\tau\circ \psi_{n_k'}(c)-\lambda(\tau)(c)|&=&
\lim_{n\to\infty}|\tau\circ \psi_{n_k'}(c)-\lambda(\tau)(E(c))|\\
&=&\lim_{n\to\infty}|\tau\circ \psi_{n_k'}(c)-\tau\circ \phi(E(c))|\\
&=&\lim_{n\to\infty}|\tau\circ \psi_{n_k'}(E(c))-\tau\circ \phi(E(c))|=0
\eneq
for all $c\in C.$ 
This contradicts with (\ref{Trun-0-n1+}). So the claim is proved. 

Suppose that the lemma is not true. 
There exists $\ep_0>0,$ a finite subset ${\cal F},$ a sequence of  tracial states $\{\tau_k\}\subset  T(B),$ and an increasing sequence $\{n_k\}$ of integers such that
\beq\label{Traceun-3}
\max_{a\in {\cal F}}|\tau_k\circ \psi_{n_k}(a)-\lambda(\tau_k)(a)|\ge \ep_0
\eneq
for all $k\ge 1.$ 
We will again use the fact that the state space of 
$C\rtimes_\af \Z$ is weak*-compact. Let $t_0$ be a limit point of $\{\tau_k\circ \psi_{n_k}\}$ in $S(C\rtimes_\af\Z).$
It follows from (\ref{Traceun-1}) that $t_0\in T(C\rtimes_\af\Z).$ 
For each $c\in C\rtimes_\af\Z,$ 
\beq\label{Traceun-4}
\lim_{k\to\infty}|\tau_k\circ \psi_{n_k}(c)-t_0(c)|=0\rforal c\in C\rtimes_\af\Z.
\eneq
However, $t_0(c)=t_0(E(c))$ for any $c\in C\rtimes_\af\Z.$ 
This implies that
\beq\label{Traceun-5}
\lim_{k\to\infty}\tau_k\circ \psi_{n_k}(E(c))=t_0(E(c))=t_0(c)\rforal c\in C\rtimes_\af\Z.
\eneq
It follows that
\beq\label{Traceun-6}
\lim_{k\to\infty}|t_k\circ \psi_{n_k}(c)-t_k\circ \psi_{n_k}(E(c))|=0\rforal c\in C\rtimes_\af\Z.
\eneq
On the other hand, by (\ref{Traceun-1+1}),
\beq\label{Traceun-7}
\lim_{k\to\infty}|t_k\circ \psi_{n_k}(E(c))-\lambda(\tau_k)(c)|&=&\lim_{k\to\infty}|t_k\circ \phi(E(c))-\lambda(\tau_k)(E(c))|=0
\eneq
for all $c\in C\rtimes_\af\Z.$
In particular, for any $a\in {\cal F},$ 
\beq\label{Traceun-8}
\lim_{k\to\infty}|t_k\circ \psi_{n_k}(a)-\lambda(\tau_k)(a)|=0.
\eneq
This contradicts with (\ref{Traceun-3}).
\end{proof}

We now prove the following:

\begin{thm}\label{MTD}
Let $X$ be an infinite compact metric space and let $\af: X\to X$ be a minimal homeomorphism.
Then $gTR((C(X)\rtimes_\af\Z)\otimes U)\le 1$ for any UHF-alegbra $U$ of infinite type.
\end{thm}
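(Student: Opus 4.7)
The plan is to apply Lemma~\ref{L1} with $C=B:=C(X)\rtimes_\af\Z$ and $A$ a model drawn from ${\cal N}_1^{\cal Z}$. Under the standing mean-dimension-zero assumption (implicit here from Theorem~\ref{MTB}), $B$ is unital, separable, simple, amenable, satisfies the UCT, is ${\cal Z}$-stable by \cite{EN}, and has finite nuclear dimension; in particular $B\otimes U$ has finite nuclear dimension. The range theorem in \cite{GLN} then produces an $A\in{\cal N}_1^{\cal Z}$ together with a compatible isomorphism ${\rm Ell}(A\otimes U)\cong {\rm Ell}(B\otimes U)$, so that $gTR(A\otimes U)\le 1$ is automatic. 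It remains to construct unital approximately multiplicative maps $\psi_n : B\otimes U\to A\otimes U$ realizing the tracial side of this isomorphism pointwise; Lemma~\ref{L1} then yields the theorem.

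To build such $\psi_n$ I would first handle the commutative part. Applying Theorem~\ref{ExC} to the composition $C(X)\hookrightarrow B\otimes U\cong_{\rm Ell} A\otimes U$ produces a unital monomorphism $h:C(X)\to A\otimes U$ whose $KL$-class, tracial map, and rotation/determinant invariant $h^{\rho}$ realize those of this inclusion. The crucial observation is that inside $B$ one has $u_\af f u_\af^*=f\circ\af^{-1}$, so each piece of the Elliott invariant of the composite inclusion factors through $\af$-coinvariants: by Pimsner--Voiculescu, $h_{*i}\circ\af^{-1}_{*i}=h_{*i}$; each trace on $A\otimes U$ pulls back to an $\af$-invariant state on $C(X)$; and the determinant piece descends similarly after absorbing $U$. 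Consequently
\[
[h]=[h\circ\af^{-1}] \text{ in } KL(C(X),A\otimes U),\quad h_T=(h\circ\af^{-1})_T,\quad h^{\rho}=(h\circ\af^{-1})^{\rho}.
\]

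With invariants matched, the uniqueness theorem~\ref{MUN2} (or its refinement~\ref{UniCtoAC}) supplies, for every $\ep>0$ and every finite ${\cal F}\subset C(X)$, a unitary $v\in A\otimes U$ with $\|v h(f) v^*-h(f\circ\af^{-1})\|<\ep$ for $f\in{\cal F}$. Taking ${\cal F}_n\uparrow C(X)$ and $\ep_n\to 0$ yields unitaries $v_n$, and I set $\psi_n'|_{C(X)}=h$ and $\psi_n'(u_\af)=v_n$; by the universal property of the crossed product these extend to unital approximately multiplicative maps $\psi_n':B\to A\otimes U$. Tensoring with ${\rm id}_U$ and using $U\otimes U\cong U$ produces the required $\psi_n:B\otimes U\to A\otimes U$. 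Lemma~\ref{Traceunif} then upgrades pointwise tracial matching on $h(C(X))$ to uniform matching on all of $B\otimes U$, since every tracial state on the crossed product factors through the canonical conditional expectation onto $C(X)$.

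The hard part will be verifying that $[h]$, $h_T$ and $h^{\rho}$ are truly $\af$-invariant at the \emph{total}-$K$ (Bockstein) and $U_\infty/CU_\infty$ levels, not merely on ordinary $K$-theory and traces. For the $K$-theoretic data this rests on the Pimsner--Voiculescu description of $K_*(B)$ as $\af$-coinvariants of $K_*(C(X))$ together with the fact that tensoring with an infinite-type UHF algebra $U$ kills $\Ext$ obstructions, so that $KL(C(X),A\otimes U)$ reduces to a manageable object. For the determinant piece one must track how $K_1(C(X))$ contributes, via $J_c^{C(X)}$, into $\Aff(T(A\otimes U))/\overline{\rho(K_0(A\otimes U))}$ and verify that $\af^{-1}$ acts trivially on the image, which again exploits the divisibility coming from $U$.
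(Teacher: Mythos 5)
Your overall route is essentially the one the paper takes (range theorem model $A$, existence theorem for a monomorphism $h$ of $C(X)$ into $A\otimes U$, approximate unitary equivalence of $h$ and $h\circ\af$, assembly of approximately multiplicative maps on the crossed product, then Lemma \ref{Traceunif} and Lemma \ref{L1}), but as written it has three genuine gaps. First, you import a mean-dimension-zero hypothesis that the statement does not contain: Theorem \ref{MTD} is asserted for an arbitrary minimal homeomorphism of an infinite compact metric space, and mean dimension zero enters only later (in Theorem \ref{MTB}) to get ${\cal Z}$-stability of the crossed product itself. The correct move is to work with $C_1=(C(X)\rtimes_\af\Z)\otimes Q$: this is automatically ${\cal Z}$-stable (so the range theorem in \cite{GLN} applies) and has finite nuclear dimension by \cite{ENSTW} with no mean-dimension assumption; deriving finite nuclear dimension of $B\otimes U$ from properties of $B$ obtained via \cite{EN} proves only a special case of the theorem. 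Second, the sentence ``by the universal property of the crossed product these extend to unital approximately multiplicative maps $\psi_n'$'' does not work as stated: the universal property requires an exactly covariant pair, while $(h,v_n)$ is only approximately covariant, and the naive assignment $\sum f_iu^i\mapsto\sum h(f_i)v_n^i$ on the dense subalgebra carries no norm control as a map on $B$. The missing argument is the sequence-algebra device used in the paper: the maps $L_n$ give an exactly covariant pair into $\ell^{\infty}(A)/c_0(A)$, hence a genuine unital homomorphism of the crossed product into that quotient, which one lifts by nuclearity of the crossed product (Choi--Effros) to a completely positive contractive map into $\ell^{\infty}(A)$ whose coordinate maps are the desired approximately multiplicative $\phi_n'$ defined on all of $B$.

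Third, the invariant matching that you yourself flag as ``the hard part'' is left unresolved, and the mechanisms you point to (Pimsner--Voiculescu coinvariants, divisibility from $U$) are not what closes it. Since $u_\af \imath(f)u_\af^{*}=\imath(f\circ\af^{-1})$ inside the crossed product, $\imath$ and $\imath\circ\af$ are unitarily equivalent, so $[\imath\circ\af]=[\imath]$ in $KL$ (including total $K$-theory) and $(\imath\circ\af)_T=\imath_T$; no PV argument is needed. For the determinant/$\rho$ part, the paper does not verify invariance of a given $h^{\rho}$ at all: it uses the freedom in Theorem \ref{ExC} to prescribe $(\phi')^{\rho}=0$, whence $\psi^{\rho}=(\phi')^{\rho}\circ\af_{*1}=0$ and Corollary \ref{UniCtoAC} applies immediately. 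In your version, unless you either make such a normalizing choice of $\chi$ or actually prove that your chosen $\chi$ satisfies $\chi\circ\af_{*1}=\chi$, the hypothesis $h^{\rho}=(h\circ\af^{-1})^{\rho}$ of the uniqueness theorem is unjustified, and with it the construction of the unitaries $v_n$.
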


\begin{proof}
By \cite{LS}, it suffices to show that $gTR((C(X)\rtimes_\af\Z)\otimes Q))\le 1.$ 
Let $C=C(X)\rtimes_\af \Z$ and $C_1=C\otimes Q.$ 
We will use the result in \cite{ENSTW} that $C_1$ has finite nuclear dimension (when 
$X$ has finite covering dimension, it was proved in \cite{TW}). 


Note that $C_1$ is a  unital separable simple amenable ${\cal Z}$-stable \CA. 
By the Range Theorem (Theorem 13.41) in \cite{GLN}, there is a unital separable simple \CA\, $A$ in UCT class with $gTR(A)\le 1$
such that
\beq\label{Existence-0-3}
\hspace{-0.2in}(K_0(C_1), K_0(C_1)_+, [1_{C_1}], K_1(C_1), T(C_1), r_{C_1})=(K_0(A), K_0(A)_+, [1_A], K_1(A), T(A), r_A).
\eneq
Since $C_1\cong C_1\otimes Q,$ we may assume that $A\cong A\otimes Q.$ 

Let $\kappa\in KL_e(C_1, A)^{++}$ which gives the part of the above identification:
$$
(K_0(C_1), K_0(C_1)_+, [1_{C_1}], K_1(C_1))=(K_0(A), K_0(A)_+, [1_A], K_1(A)).
$$
Let $\lambda: T(A)\to T(C_1)$ be an affine homeomorphism  given by (\ref{Existence-0-3}) which is compatible with $\kappa.$
Put $\kappa_0=\kappa|_{K_0(C_1)}$ and $\kappa_1=\kappa|_{K_1(C_1)}.$
 Let $\imath_T: T(C_1)\to T_{\bf f}(C(X))$ be defined 
by $\imath_T(\tau)(f)=\tau(\imath(f))$ for all $\tau\in T(C_1)$ and for all $f\in C(X),$ 
where $\imath: C(X)\to C$ is the embedding.
By \ref{ExC}, there exists a unital monomorphism $\phi': C(X)\to A$ such that
\beq\label{Existence-0-4}
[\phi']=\kappa\circ [\imath],\,\,\, \phi'_T=\imath_T\circ \lambda\andeqn (\phi')^{\rho}=0.
\eneq
Consider $\psi=\phi'\circ \af: C(X)\to A.$ 
Then $[\imath\circ \af]=[\imath],$ $(\imath\circ \af)_T=\imath_T.$ It follows that
\beq\label{Existence-0-5}
[\psi]=[\phi'], \psi_T=\phi'_T.
\eneq
Note, since $\psi^{\rho}=\phi^{\rho}\circ \af_{*1},$ 
\beq\label{Existence-0-6}
\psi^{\rho}=(\phi')^{\rho}.
\eneq
It follows from \ref{UniCtoAC} that there exists a sequence of unitaries $\{u_n\}\subset A$ such that
\beq\label{Existence-0-7}
\lim_{n\to\infty}\|u_n^*\phi'(f)u_n-\phi'\circ \af(f)\|=0\rforal f\in C(X).
\eneq 
Let $C_0$ be the  subalgebra of $C$ whose elements have the form 
$\sum_{i=-k}^k f_iu^i,$ where $f_i\in C(X)$ and $u\in C$ is a unitary which implement 
the action $\af,$ i.e., $u^*fu=f\circ \af$ for all $f\in C(X).$ 
Define a linear map $L_n: C_0\to A$ by 
\beq\label{Existence-0-8}
L_n(\sum_{i=-k}^k f_i u^i)=\sum_{i=-k}^k f_i u_n^i.
\eneq
Let $L: C_0\to l^{\infty}(A)$ be defined by $L(c)=\{L_n(c)\}$ and let $\pi: l^{\infty}(A)\to l^{\infty}(A)/c_0(A)$
be the quotient map. Then 
$\pi\circ L: C_0\to l^{\infty}(A)/c_0(A)$ is a unital $*$-\hm. In particular, it is a covariant representation 
of $(C(X), \af).$ Thus $\pi\circ L$ gives a unital \hm\, $\Phi: C\to l^{\infty}(A)/c_0(A)$ such that
$\Phi|_{C_0}=\pi\circ L.$ Since $C$ is amenable, there exist a \morp\, $\Lambda: C\to l^{\infty}(A)$ such 
that $\pi\circ \Lambda=\Phi.$
Let $\pi_n: l^{\infty}(A)\to A$ be the projection to the $n$-th coordinate. Put $\phi_n'=\pi_n\circ \Lambda.$
Then $\phi_n': C\to A$ is a \morp. Moreover,  since $\pi\circ \Lambda=\Phi,$ we have 
\beq\label{Existence-0-9}
\lim_{n\to\infty}\|L_n(c)-\phi_n'(c)\|=0\rforal c\in C.
\eneq
In particular, 
\beq\label{Existence-0-10}
&&\lim_{n\to\infty}\|\phi'_n(a)\phi'_n(b)-\phi'_n(ab)\|=0\rforal a, b\in C\andeqn\\
&&\lim_{n\to\infty}\|\phi'_n(a)-\phi(a)\|=0\rforal a\in C(X).
\eneq
Note that  $\phi_T=\imath_T\circ \lambda,$ i.e.,
\beq\label{Existence-0-11}
\tau\circ \phi(a)=\lambda(\tau)(a)\rforal a\in C.
\eneq
It follows from \ref{Traceunif} that 
\beq\label{Existence-0-12}
\lim_{n\to\infty}\sup_{\tau\in T(A)}|\tau\circ \phi'_n(c)-\lambda(\tau)(c)|=0\rforal c\in C.
\eneq
We then define $\phi_n: C_1\to A\otimes U\cong A$ by $\phi_n(c\otimes a)=\phi_n'(c)\otimes a$ for all $c\in C$ and $a\in U.$
Then
\beq\label{Existence-0-12}
\lim_{n\to\infty}\|\phi_n(a)\phi_n(b)-\phi_n(ab)\|=0\rforal a,b\in C_1,\\
\lim_{n\to\infty}\sup_{\tau\in T(A)}|\tau\circ \phi_n(a)-\lambda(\tau)(a)|=0\rforal a\in C_1.
\eneq
Since $C_1$ has finite nuclear dimension, by \ref{L1}, the above implies that $gTR(C_1)\le 1.$ 

\end{proof}

\begin{cor}\label{CC1}
Let $X$  be a compact metric space, let $\af: X\to X$ be a minimal homeomorphism and  let $C=C(X)\rtimes_\af \Z.$
Then $C\otimes {\cal Z}\in {\cal N}_1^{\cal Z}.$ 
\end{cor}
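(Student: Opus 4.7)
The plan is to unpack the definition of $\mathcal{N}_1^{\mathcal{Z}}$ and verify each clause for $B := C \otimes \mathcal{Z}$, with the only nontrivial input being Theorem \ref{MTD}.

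First I would record the basic structural properties. The crossed product $C = C(X)\rtimes_\af\Z$ of a minimal dynamical system is a unital separable simple nuclear \CA, and $C$ satisfies the UCT since $C(X)$ does and the UCT class is closed under crossed products by $\Z$ (Pimsner--Voiculescu). Tensoring with the Jiang--Su algebra $\mathcal{Z}$ (which is unital, simple, nuclear and satisfies the UCT) preserves all of these properties, so $B$ is unital, separable, simple, amenable, and in the UCT class.

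Next I would verify $\mathcal{Z}$-stability. Since $\mathcal{Z}$ is strongly self-absorbing, $\mathcal{Z} \otimes \mathcal{Z} \cong \mathcal{Z}$, so
\[
B \otimes \mathcal{Z} \;\cong\; C \otimes \mathcal{Z} \otimes \mathcal{Z} \;\cong\; C \otimes \mathcal{Z} \;=\; B.
\]

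The main point to check is then the generalized tracial rank condition: $B \otimes U$ has $gTR \le 1$ for some UHF-algebra $U$ of infinite type. Here I would exploit that any UHF-algebra of infinite type is itself $\mathcal{Z}$-stable (as $\mathcal{Z}$ embeds unitally into $U$ and $U$ is strongly self-absorbing), hence $\mathcal{Z} \otimes U \cong U$. Consequently
\[
B \otimes U \;=\; C \otimes \mathcal{Z} \otimes U \;\cong\; C \otimes U,
\]
and Theorem \ref{MTD} gives $gTR(C \otimes U) \le 1$ directly. Combining the four bullets of Definition \ref{DN1} yields $B \in \mathcal{N}_1^{\mathcal{Z}}$, completing the corollary. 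There is no real obstacle here beyond invoking Theorem \ref{MTD}; the content of the corollary is simply the observation that once we know $(C(X)\rtimes_\af\Z)\otimes U$ has $gTR\le 1$ for some infinite type UHF $U$, the $\mathcal{Z}$-stable object $C\otimes \mathcal{Z}$ automatically lies in the classifiable class.
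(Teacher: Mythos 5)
Your proposal is correct and matches the paper's (implicit) argument: Corollary \ref{CC1} is deduced directly from Theorem \ref{MTD} by using ${\cal Z}\otimes{\cal Z}\cong{\cal Z}$ and ${\cal Z}\otimes U\cong U$ so that $(C\otimes{\cal Z})\otimes U\cong C\otimes U$ has $gTR\le 1$, together with the standard facts that $C(X)\rtimes_\af\Z$ is unital, separable, simple, amenable and satisfies the UCT. No substantive difference from the paper's route.
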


{\bf The proof of Theorem \ref{MTB} and Theorem \ref{MTA}}:

 It is proved in \cite{EN} that, when $(X, \af)$ is minimal dynamical system with mean dimension zero, 
 $C(X)\rtimes_\af\Z$ is ${\cal Z}$-stable. Thus Theorem \ref{MTB} follows immediately from 
 \ref{CC1}. 
 When $X$ has finite dimension, every minimal dynamical system $(X, \af)$ has mean dimension zero.
 So Theorem \ref{MTA} follows.
 
 \vspace{0.2in}

 {\bf The proof of \ref{MCC}}: 
 
 Let $C=C(X)\rtimes_\af\Z.$ By \ref{MTB}, $C\in {\cal N}_1^{\cal Z}.$ 
 It follows from the Range Theorem (Theorem 13.41) in \cite{GLN} that there exists a unital simple inductive limit  $A$ 
 of sub-homogenous \CA s described in \ref{DMtorus} such that
 ${\rm Ell}(A)\cong {\rm Ell}(C).$ By the isomorphism theorem (Theorem 29.4) in \cite{GLN}, $C\cong A.$ 
 
 \vspace{0.2in}
 
 {\bf The proof of Theorem \ref{Trtr1}}:
 As mentioned earlier, this can be proved without using \cite{GLN}. The  ``only if"  part follows from \cite{LNjfa}.
 For ``if" part,  let $C=C(X)\rtimes_\af \Z.$ 
By \cite{LNjfa} again, there is  a  unital simple amenable \CA\, $A$ which satisfies the UCT and 
 $A\otimes Q$ has tracial rank at most one such that 
 ${\rm Ell}(A)\cong {\rm Ell}(C).$  Using this $A,$ exactly the same proof of \ref{MTD} shows 
 that $TR(C\otimes Q)\le 1.$ 
 
 The following is course a consequence of of \ref{MTD}. But it also a colliery of \ref{Trtr1}:
 \begin{cor}\label{CCC}
 Let $X$ be a infinite  compact metric space and let $\af: X\to X$ be a minimal homeomorphism.
 Suppose that $K_0(C(X)\rtimes_\af\Z)$ has a unique state. 
 Then $TR((C(X)\rtimes_\af\Z)\otimes Q)\le 1.$  
 \end{cor}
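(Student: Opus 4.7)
The plan is to deduce this from Theorem \ref{Trtr1} by verifying its two hypotheses on $C := C(X)\rtimes_\af\Z$: (i)~that $K_0(C)\otimes \Q$ is a dimension group, and (ii)~that each extremal tracial state on $C$ induces an extremal state on $K_0(C)$.

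Condition (ii) is immediate from the hypothesis. The state space $S_{[1_C]}(K_0(C))$ is a singleton $\{s\}$, so the pairing $r_C\colon T(C)\to S_{[1_C]}(K_0(C))$ is necessarily constant with value $s$, and the unique point of a singleton is automatically extremal; hence every extremal trace (indeed every trace) on $C$ induces the extremal state $s$. For condition (i), since $C$ is a unital simple $C^*$-algebra arising from a minimal homeomorphism of an infinite compact metric space (hence stably finite and, by standard results, of stable rank one), the order on $K_0(C)$ modulo infinitesimals is determined by states. With a unique state, the induced map $K_0(C)\otimes \Q\to \R$ becomes an order embedding identifying $K_0(C)\otimes \Q$ with a $\Q$-vector subspace of $\R$. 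Any divisible, torsion-free, totally ordered abelian group is automatically a dimension group: unperforation is clear, and Riesz interpolation is trivial in any totally ordered group. This gives (i), and Theorem \ref{Trtr1} then delivers $TR(C\otimes Q)\le 1$.

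The main obstacle I anticipate is the justification in the second paragraph that the positive cone of $K_0(C)\otimes \Q$ is faithfully captured by $s$, i.e.\ that no nontrivial infinitesimals survive after rationalization. This is not a formal consequence of the unique-state hypothesis alone and requires invoking structural facts about $K_0$ for minimal crossed products---for example, that $C$ has enough comparison (and stable rank one) for the Blackadar--Handelman description of $K_0(C)_+$ to apply after tensoring with $\Q$. Once this input is in place, the totally-ordered-subgroup-of-$\R$ picture makes the dimension-group conclusion essentially automatic, and the corollary follows at once from \ref{Trtr1}.
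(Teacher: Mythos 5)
There is a genuine gap, and it sits exactly where you flagged it. Your verification of condition (ii) is fine and matches the paper, but your argument for condition (i) hinges on the claim that a unique state makes $K_0(C)\otimes\Q$ order-embed into $\R$ as a totally ordered group. That is false in general: uniqueness of the state does not kill infinitesimals, i.e.\ the kernel of the pairing can be a nontrivial $\Q$-vector space after rationalization, and its nonzero elements are neither positive nor negative, so the group is only partially ordered and the map to $\R$ is not an order embedding. This is not a pathological worry in the present setting---crossed products such as those of Furstenberg transformations of the $2$-torus have unique state on $K_0$ together with a nontrivial (rationally surviving) infinitesimal subgroup. Your proposed repair via comparison/Blackadar--Handelman does not close this: at best (and only after tensoring with $Q$, since $C$ itself can fail to have comparison for general infinite $X$; $C\otimes Q$ is ${\cal Z}$-stable, hence has weakly unperforated $K_0$) it shows that the positive cone of $K_0(C\otimes Q)$ is $\{x: s(x)>0\}\cup\{0\}$ for the unique state $s$. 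That still leaves you needing to prove that this partially ordered group has Riesz interpolation, which your ``totally ordered, hence trivially Riesz'' step was supposed to supply and no longer does.

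The missing step can be filled, and this is precisely what the paper does differently: it passes to $C_1=C\otimes Q$, observes that $S_{[1_{C_1}]}(K_0(C_1))$ is a single point and hence trivially a Choquet simplex, and then quotes 5.10 of \cite{LNjfa} to conclude that $K_0(C_1)$ is a dimension group, after which Theorem \ref{Trtr1} applies. (Alternatively one can check interpolation by hand for the cone $\{s>0\}\cup\{0\}$ on a divisible torsion-free group: given $a_1,a_2\le b_1,b_2$, if some $a_i$ equals some $b_j$ that common element interpolates, and otherwise any $c$ with $\max_i s(a_i)<s(c)<\min_j s(b_j)$ works, using density of $s(K_0(C_1))\supseteq\Q$; but some such argument, or the citation, is needed and is absent from your proposal.) As written, your proof does not establish the dimension-group hypothesis of Theorem \ref{Trtr1}.
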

 
 \begin{proof}
 Put $C=C(X)\rtimes_\af\Z$ and $C_1=C\otimes Q.$  Then $S_{[1_{C_1}]}(K_0(C_1))$ has a single point. 
 Therefore it is a Choquet simplex.  In particular, by 5.10 of \cite{LNjfa}, $K_0(C_1)$ is a dimension group.
 It is also clear that all extremal points of $T(C_1)$ maps to the extremal point of $S_{[1_{C_1}]}(K_0(C_1)).$
 So \ref{Trtr1} applies.
 \end{proof}
 
 The above certain applies to all cases that studied in \cite{Lncjm15}  (see Theorem 6.1 and 6.2 of \cite{Lncjm15}). It also 
 applies to all connected $X$ with torsion $K_0(C(X)).$ 
 
 \begin{thm}\label{ANT}
 Let $C\in {\cal N}_1$ be a unital separable simple \CA\, and let $\af\in Aut(C)$ such 
 that $\tau(a)=\tau(E(a))$ for all $a\in C\rtimes_\af\Z$ and $\tau\in T(C\rtimes_\af\Z).$ 
 Suppose that $(C\rtimes_\af\Z)\otimes U$ has finite nuclear dimension for some UHF-algebra $U$ of infinite type.
 Then $C\rtimes_\af\Z\in {\cal N}_1.$ 
  \end{thm}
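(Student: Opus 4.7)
The plan is to mirror the proof of Theorem \ref{MTD} step by step, replacing $C(X)$ throughout by the general algebra $C\in {\cal N}_1$. Put $B=C\rtimes_\af\Z$ and $B_1=B\otimes Q$. By \cite{LS}, it suffices to show $gTR(B_1)\le 1$. Applying the Range Theorem (Theorem 13.41 of \cite{GLN}) to $B_1$, one produces a unital simple \CA\, $A$ in the UCT class with $A\cong A\otimes Q$ and $gTR(A)\le 1$ such that ${\rm Ell}(A)\cong {\rm Ell}(B_1)$; let $\kappa\in KL_e(B_1,A)^{++}$ and $\lambda:T(A)\to T(B_1)$ implement this identification.

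Next, I would apply the existence theorem (the analog of \ref{ExC} with source in ${\cal N}_1$ rather than $C(X)$, which should be available via the more general existence results in \cite{GLN}) to produce a unital monomorphism $\phi':C\to A$ with $[\phi']=\kappa\circ [\imath]$, $\phi'_T=\imath_T\circ \lambda$, and $(\phi')^\rho=0$, where $\imath:C\to B$ is the canonical embedding and $\imath_T(\tau)(c)=\tau(\imath(c))$. Put $\psi=\phi'\circ \af$. Since the canonical unitary $u\in B$ implementing $\af$ satisfies $u^*\imath(c)u=\imath(\af(c))$ and each $\tau\in T(B)$ is a trace, one has $\imath_T\circ \af=\imath_T$; combined with $[\imath\circ \af]=[\imath]$, this yields $[\psi]=[\phi']$, $\psi_T=\phi'_T$, and $\psi^\rho=(\phi')^\rho=0$. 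The corresponding uniqueness theorem (analog of \ref{UniCtoAC}) then supplies unitaries $u_n\in A$ with $\|u_n^*\phi'(c)u_n-\phi'\circ \af(c)\|\to 0$ for all $c\in C$.

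Let $C_0\subset B$ denote the dense $*$-subalgebra of finite sums $\sum f_iu^i$ with $f_i\in C$. Define $L_n:C_0\to A$ by $L_n(\sum f_iu^i)=\sum \phi'(f_i)u_n^i$. The sequence $L:C_0\to \ell^\infty(A)$, $L(c)=\{L_n(c)\}$, becomes multiplicative modulo $c_0(A)$ and defines a covariant representation of $(C,\af)$, so $\pi\circ L$ extends uniquely to a unital $*$-homomorphism $\Phi:B\to \ell^\infty(A)/c_0(A)$. Amenability of $B$ provides a \morp\, lifting $\Lambda:B\to \ell^\infty(A)$ with $\pi\circ \Lambda=\Phi$; set $\phi'_n=\pi_n\circ \Lambda$. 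These \morp s are asymptotically multiplicative on $B$ and asymptotically agree with $\phi'$ on $C$. Lemma \ref{Traceunif}, whose hypothesis $\tau=\tau\circ E$ is precisely what has been assumed in \ref{ANT}, promotes pointwise trace convergence to uniform trace convergence on finite subsets of $B$.

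Finally, using $A\cong A\otimes Q$, extend each $\phi'_n$ to $\phi_n:B_1\to A$ by tensoring with the identity on $Q$. The sequence $\{\phi_n\}$ together with $(\kappa,\lambda)$ fulfills the hypotheses of Lemma \ref{L1}; since $B_1$ has finite nuclear dimension (as $(B\otimes U)\otimes Q\cong B_1$ and $Q$ is AF), Lemma \ref{L1} gives $gTR(B_1)\le 1$, and one more application of \cite{LS} yields $gTR(B\otimes V)\le 1$ for every UHF-algebra $V$ of infinite type, whence $B\in {\cal N}_1$. The principal obstacle is ensuring that both the existence theorem (of the type in \ref{ExC}) and the uniqueness theorem (of the type in \ref{UniCtoAC}) apply with source in ${\cal N}_1$ in place of $C(X)$: one must appeal to the more general versions of the relevant theorems in \cite{GLN}, and in particular verify that the $\rho$-component of $\phi^{\ddag}$ transfers correctly without the commutative structure on the source.
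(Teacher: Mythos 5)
Your outline does follow the same overall strategy as the paper's proof (which is indeed a rerun of the argument for Theorem \ref{MTD}), but two points that the paper must and does handle are missing or left unresolved in your proposal. First, simplicity of $B=C\rtimes_\af\Z$ is nowhere established. For a minimal homeomorphism this is classical, but for a general $\af\in\Aut(C)$ it is not automatic, and without it the Range Theorem, Lemma \ref{L1}, and even the conclusion $B\in{\cal N}_1$ (a class of \emph{simple} \CA s) do not apply. The paper extracts simplicity from the hypothesis $\tau=\tau\circ E$: by the argument of 4.4 of \cite{Kjfa96} this trace condition forces $\af^k$ to be (strongly) outer for every $k\neq 0$, and then Kishimoto's theorem (Theorem 3.1 of \cite{Kcmp81}) gives simplicity of the crossed product. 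This step is a genuine missing idea in your write-up, not a routine verification.

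Second, you correctly identify the ``principal obstacle'' --- that the existence and uniqueness theorems \ref{ExC} and \ref{UniCtoAC} are stated for commutative source $C(X)$ --- but you leave it unresolved, and your plan to get $\phi':C\to A$ with $(\phi')^{\rho}=0$ is not how it is carried out. The paper instead works with source $C\otimes Q$ (not $C$ itself), fixes the splittings $J_c^{C\otimes Q}$ and $J_c^A$, defines $\gamma$ on $U(C\otimes Q)/CU(C\otimes Q)$ explicitly by $\gamma|_{\Aff/\overline{\rho}}=\overline{(\imath_T\circ\lambda)_\sharp}$ and $\gamma|_{J_c(K_1)}=J_c^A\circ\kappa_1\circ\imath_{*1}\circ\pi_{K_1}$, and then invokes the general existence theorem (Theorem 21.9 of \cite{GLN}) to produce $\phi:C\otimes Q\to A$ with $[\phi]=\kappa\circ[\imath]$, $\phi_T=\imath_T\circ\lambda$, $\phi^{\ddag}=\gamma$; the computation $\imath_{*1}\circ\bt_{*1}=\imath_{*1}$ then gives $\phi^{\ddag}=(\phi\circ\bt)^{\ddag}$, and the general uniqueness theorem (Theorem 12.11 of \cite{GLN}) supplies the unitaries $u_n$. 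So the obstacle is real but is closed by citing these specific results of \cite{GLN} with $Q$-stabilized source, not by an ``analog of \ref{ExC}/\ref{UniCtoAC}'' for $C$ alone. Finally, a smaller omission: to place $B$ in ${\cal N}_1$ you also need amenability and the UCT for the crossed product; the paper notes that $C\otimes U$ lies in the bootstrap class, hence so does $(C\otimes U)\rtimes_\bt\Z$, which also justifies the amenability you use in the lifting step. The remainder of your argument (the maps $L_n$, the $\ell^\infty(A)/c_0(A)$ lifting, Lemma \ref{Traceunif}, and Lemma \ref{L1}) matches the paper.
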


\begin{proof}
The proof is almost identical to that of \ref{MTD} except that we have to use a different 
existence and uniqueness theorems. 
Let $C_1=C\rtimes_\af\Z$ and $C_2=C_1\otimes Q.$  We will show that $gTR(C_2)\le 1.$ 

It follows from  4.4 of \cite{Kjfa96} that $\af^k$ is strongly outer for all integer $k\not=0.$ Therefore $C_1$ is simple (see Theorem 3.1 of \cite{Kcmp81}). 
Hence $C_2$ is a  unital separable simple amenable ${\cal Z}$-stable \CA. 
By the Range Theorem (Theorem 13.41)  in \cite{GLN}, there is a unital separable simple \CA\, $A$ in UCT class with $gTR(A)\le 1$
such that
\beq\label{ANT-3}
\hspace{-0.2in}(K_0(C_2), K_0(C_2)_+, [1_{C_2}], K_1(C_2), T(C_2), r_{C_2})=(K_0(A), K_0(A)_+, [1_A], K_1(A), T(A), r_A).
\eneq
Since $C_2\cong C_2\otimes Q,$ we may assume that $A\cong A\otimes Q.$ 

Let $\kappa\in KL_e(C_2, A)^{++}$ which gives the part of the above identification:
$$
(K_0(C_2), K_0(C_2)_+, [1_{C_2}], K_1(C_2))=(K_0(A), K_0(A)_+, [1_A], K_1(A)).
$$
Let $\lambda: T(A)\to T(C_2)$ be an affine homeomorphism  above which is compatible with $\kappa.$
Put $\kappa_0=\kappa|_{K_0(C_2)}$ and $\kappa_1=\kappa|_{K_1(C_2)}.$
 Let $\imath_T: T(C_2)\to T(C\otimes Q)$ be defined 
by $\imath_T(\tau)(c)=\tau(\imath(c))$ for all $\tau\in T(C_2)$ and for all $c\in C_2,$ 
where $\imath: C\otimes Q\to C_1\otimes Q$ is the embedding.
Since $(\kappa_0, \lambda)$ is compatible, 
$\lambda$ induces an isomorphism 
$$
{\bar \lambda_\sharp}: \Aff(T(C_2))/\overline{\rho_{C_2}(K_0(C_2))}\to
\Aff(T(A))/\overline{\rho_A(K_0(A))}.
$$

Fix $J_c^{C_2}$ and $J_c^A$ as in \ref{dfchi}. Define $\gamma: U(C\otimes Q)/CU(C\otimes Q)\to U(A)/CU(A)$
as follows:
\beq\label{ANT-6}
\gamma|_{\Aff(T(C\otimes Q))/\overline{\rho_{C\otimes Q}(K_0(C\otimes Q))}}={\overline{ (\imath_T\circ \lambda)_{\sharp}}}\andeqn\\
\gamma|_{J_c^{C\otimes Q}(K_1(C\otimes Q)}=J_c^{A}\circ \kappa_1\circ \imath_{*1}\circ (\pi_{K_1(C\otimes Q)})|_{J_c^{C\otimes Q}(K_1(C\otimes Q))}.
\eneq

By  Theorem 21.9  of \cite{GLN}, there exists a unital monomorphism $\phi: C\otimes Q\to A$ such that
\beq\label{ANT-7}
[\phi]=\kappa\circ [\imath],\,\,\, \phi_T=\imath_T\circ \lambda\andeqn (\phi)^{\ddag}=\gamma.
\eneq
Define $\bt=\af\otimes {\rm id}_U: C\otimes Q\to C\otimes Q$ and consider  $\psi=\phi\circ \bt: C\otimes Q\to A.$ 
Then $[\imath\circ \af]=[\imath],$ $(\imath\circ \af)_T=\imath_T.$ It follows that
\beq\label{ANT-8}
[\psi]=[\phi], \psi_T=\phi_T.
\eneq
Note, since  $\imath_{*1}\circ \bt_{*1}=\imath_{*1},$ 
\beq\label{ANT-9}
\psi^{\ddag}|_{\Aff(T(C\otimes U))/\overline{\rho_{C\otimes U}(K_0(C\otimes U))}}&=&{\bar \lambda_\sharp}=
\phi^{\ddag}|_{\Aff(T(C\otimes U))/\overline{\rho_{C\otimes U}(K_0(C\otimes U))}}\andeqn\\
\psi^{\ddag}|_{J_c^{C\otimes U}(K_1(C\otimes U))}&=&\phi^{\ddag}\circ \bt^{\ddag}|_{J_c^{C\otimes U}(K_1(C\otimes U))}\\
&=&J_c^{A}\circ \kappa_1\circ \imath_{*1}\circ \bt_{*1}\circ  \pi_{K_1}^{C\otimes U}|_{J_c^{C\otimes U}(K_1(C\otimes U))}\\
&=&J_c^{A}\circ \kappa_1\circ \imath_{*1}\circ \pi_{K_1}^{C\otimes U}|_{J_c^{C\otimes U}(K_1(C\otimes U))}\\
&=&\phi^{\ddag}|_{J_c^{C\otimes U}(K_1(C\otimes U))}.
\eneq
It follows that $\phi^{\ddag}=\psi^{\ddag}.$ 

By  the uniqueness theorem 
(Theorem 12.11) of \cite{GLN},  that there exists a sequence of unitaries $\{u_n\}\subset A$ such that
\beq\label{ANT-10}
\lim_{n\to\infty}\|u_n^*\phi'(f)u_n-\phi'\circ \af(f)\|=0\rforal f\in C\otimes U.
\eneq 
Let $C_0$ be a subalgebra of $C$ whose elements have the form 
$\sum_{i=-k}^k f_iu^i,$ where $f_k\in C\otimes U$ and $u\in C\otimes U\rtimes_\bt\Z$ is a unitary which implement 
the action $\bt,$ i.e., $u^*fu=f\circ \bt$ for all $f\in C\otimes U.$ 
Define a linear map $L_n: C_0\to A$ by 
\beq\label{ANT-11}
L_n(\sum_{i=-k}^k f_i u^i)=\sum_{i=-k}^k f_i u_n^i.
\eneq
Since $C\in {\cal N}_1,$ $C\otimes U$ is in the so-called bootstrap class. Therefore so is $C_2\cong (C\otimes U)\rtimes_\bt\Z.$ In particular, $C_2$ is amenable. 
The same argument used in the proof of \ref{MTD} shows that 
there exists a sequence of \morp\, $\phi_n: C_2\to A$ such that 
\beq\label{ANT-12}
&&\lim_{n\to\infty}\|\phi_n(a)\phi_n(b)-\phi_n(ab)\|=0\rforal a, b\in C_2\andeqn\\
&&\lim_{n\to\infty}\|\phi_n(a)-\phi(a)\|=0\rforal a\in C\otimes U.
\eneq
Note that  $\phi_T=\imath_T\circ \lambda,$ i.e.,
\beq\label{Existence-0-11}
\tau\circ \phi(a)=\lambda(\tau)(a)\rforal a\in C\otimes U.
\eneq
It follows from \ref{Traceunif} that 
\beq\label{Existence-0-12}
\lim_{n\to\infty}\sup_{\tau\in T(A)}|\tau\circ \phi_n(c)-\lambda(\tau)(c)|=0\rforal c\in C.
\eneq
Since $C_2$ has finite nuclear dimension, by \ref{L1}, $gTR(C_2)\le 1.$
In other words, $C\rtimes_\af\Z\in {\cal N}_1.$ 
\end{proof}

\begin{rem}\label{RCC}
Let $C$ be a unital simple separable \CA\, and $\af\in Aut(C).$
By a refined argument of Kishimoto, as in Remark 2.8 of \cite{MSI}, if $\af$ has weak Rokhlin property,
then $\tau(au^k)=0$ for any $a\in C$ and any integer $k\not=0.$ Thus $\tau(a)=\tau(E(a))$ for all $a\in C\rtimes_\af\Z$ and all $\tau\in T(C\rtimes_\af\Z),$ where $E: C\rtimes_\af\Z\to C$ is the  canonical conditional expectation. On the other hand, if $\tau(a)=\tau(E(a))$ for all $a\in C\rtimes_\af\Z$ and all $\tau\in T(C\rtimes_\af\Z),$ by the proof of 4.4 of \cite{Kjfa96}, 
the $\Z$ action induced by $\af$ is strongly outer (see also Remark 2.8 of \cite{MSI}). 
Thus, in \ref{ANT}, if in addition, $C\in {\cal N}_1^{\cal Z},$ then by Corollary 4.10 of \cite{MSII},  $C\rtimes_\af \Z$ 
is ${\cal Z}$-stable.  In light of this, it seems that the condition 
$\tau(a)=\tau(E(a))$ for all $a\in C\rtimes_\af\Z$ is a reasonable replacement for some version of weak Rokhlin 
property.
\end{rem}



hlin@uoregon.edu


\begin{thebibliography}{10}

\bibitem{BH}  B. Blackadar and D. Handelman, {\em  Dimension functions and traces on $C^*$-algebras}, 
 J. Funct. Anal. {\bf 45} (1982),  297--340.

\bibitem{BT}  N. Brown and A. Toms, {\em Three applications of the Cuntz semigroup},  Int. Math. Res. Not. IMRN 2007, no. 19, Art. ID rnm068, 14 pp. 

\bibitem{CEY} M-D.  Choi, G. A.  Elliott and N. Yui, {\em Gauss polynomials and the rotation algebra},
 Invent. Math. {\bf 99} (1990),  225--246.

\bibitem{EE}  G. A. Elliott and D. E. Evans, {\em The structure of the irrational rotation $C^*$-algebra},  Ann. of Math. (2) {\bf 138} (1993), 477--501.


\bibitem{ELP} S. Eilers, T. A.  Loring and G. K. Pedersen, {\em Stability of anticommutation relations: an application of noncommutative CW complexes},  J. Reine Angew. Math.  {\bf 499} (1998), 101--143.

\bibitem{EG} G. A.  Elliott and G. Gong, {\em On the classification of $C^*$-algebras of real rank zero. II}, 
 Ann. of Math. {\bf 144} (1996),  497--610.

\bibitem{ENSTW}
G.~A. Elliott, Z.~Niu, L.~Santiago, A.~P.~Tikuisis and W.~Winter,  \emph{Decomposition rank of minimal crossed products}, draft preprint, 2015.


\bibitem{EN} G. A. Elliott and Z. Niu, {\em The C*-algebra of a minimal homeomorphism of zero mean dimension}, preprint,
 (arXiv:1406.2382).

\bibitem{Freu}  H. ~Freudenthal, {\em Entwicklungen yon Rdumen und ihren Gruppen}, Compositio
Math., {\bf 4}, (1937), 145-234.

\bibitem{Fur} H. Furstenberg, {\em Strict ergodicity and transformation of the torus},  Amer. J. Math. {\bf 83} (1961), 573--601.

\bibitem{GPS} T. Giordano, I. Putnam and C. Skau, {\em Topological orbit equivalence and $C^*$-crossed products}, J. Reine Angew. Math. {\bf 469} (1995), 51--111. 

\bibitem{GLN} G. Gong, H. Lin and Z. Niu, {\em Classification of finite simple amenable ${\cal Z}$-stable $C^*$-algebras}, 
preprint ( arXiv:1501.00135). 

\bibitem{JS} X.  Jiang and H.  Su, {\em On a simple unital projectionless C?-algebra},  Amer. J. Math.  {\bf 121} (1999),  
359--413.

\bibitem{Kcom81} A. Kishimoto, {\em  Outer automorphisms and reduced crossed products of simple $C^*$-algebras},
Comm. Math. Phys. {\bf 81} (1981),  429--435.

\bibitem{Kjfa96} A.  Kishimoto, {\em The Rohlin property for shifts on UHF algebras and automorphisms of Cuntz algebras},  J. Funct. Anal.  {\bf 140} (1996),  100--123.

\bibitem{Lnduke} H.  Lin, {\em Classification of simple $C^*$-algebras of tracial topological rank zero},  Duke Math. J. 
 {\bf 125} (2004),  91--119.


 \bibitem{Lninv} H. Lin, {\em Asymptotic unitary equivalence and classification of simple amenable \CA s},  Invent. Math. {\bf 183} (2011),  385--450.
 
 \bibitem{Lnduke} H.  Lin, {\em Classification of simple $C^*$-algebras of tracial topological rank zero},  Duke Math. J. 
 {\bf 125} (2004),  91--119.
 
 \bibitem{Lncrell} H.  Lin, {\em Localizing the Elliott conjecture at strongly self-absorbing $C^*$-algebras, II},  J. Reine Angew. Math. {\bf  692} (2014), 233--243.
 
 \bibitem{Lnmem4} H. Lin, {\em Locally AH-algebras}, Memoirs Amer. Math. Soc., {\bf 235}  (2015), no 1107, vi+109pp.

\bibitem{Lncjm15} H. Lin, {\em  Minimal dynamical systems on connected odd dimensional spaces}, 
Can. J. Math., to appear (arXiv:1404.7034). 
 

\bibitem{LNlift} H. Lin and  Z. Niu, {\em  Lifting KK-elements, asymptotic unitary equivalence and classification of simple \CA s}, Adv. Math. {\bf 219} (2008), 1729--1769.

\bibitem{LNjfa} H. Lin and Z. Niu, {\em The range of a class of classifiable separable simple amenable $C^*$-algebras}, J. Funct. Anal. {\bf 260} (2011), 1--29.

    \bibitem{LP} H. Lin and N. C.  Phillips, {\em  Crossed products by minimal homeomorphisms},  J. Reine Angew. Math. {\bf 641} (2010), 95--122.

\bibitem{LS} H. Lin and W. Sun, {\em Tensor Products of Classifiable C*-algebras}, preprint, (arXiv:1203.3737).

\bibitem{qLP} Q. Lin and N. C. Phillips, {\em The structure of $C^*$-algebras of minimal diffeomorphisms}, draft 
preprint. 



\bibitem{Mar} S.~ Mardesic. \newblock {\em On covering dimension and inverse limits of compact spaces},
Illinois J. Math., {\bf 4} (1960), 278--291.

\bibitem{MSI}  H. Matui and Y.  Sato, {\em ${\cal Z}$-stability of crossed products by strongly outer actions},
 Comm. Math. Phys. {\bf 314}  (2012), n 193--228.
 
\bibitem{MSII} H.   Matui and Y. Sato, {\em ${\cal Z}$-stability of crossed products by strongly outer actions II},
Amer. J. Math. {\bf 136} (2014),  1441--1496.
 
 
\bibitem{Rob} L.  Robert, {\em Classification of inductive limits of 1-dimensional NCCW complexes},  Adv. Math.  {\bf 231} (2012),  2802--2836

\bibitem{Sk} K. Strung, {\em C*-algebras of minimal dynamical systems of the product of a Cantor set and an odd dimensional sphere}, J. Funct. Anal., to appear ( arXiv:1403.3136).

\bibitem{SW}  K. R. Strung and W. Winter, {\em   UHF-slicing and classification of nuclear $C^*$-algebras,}
 J. Topol. Anal.  {\bf 6}  (2014),  465--540.

\bibitem{KT} K. Thomsen, {\em Traces, unitary characters and crossed products by Z},  Publ. Res. Inst. Math. Sci. {\bf 31} (1995), 1011--1029.

 \bibitem{To} J. Tomiyama, {\em Topological full groups and structure of normalizers in transformation group $C^*$-algebras},  Pacific J. Math. {\bf 173} (1996), 571--583.
 
 \bibitem{TW} A.  Toms and W.  Winter, {\em Minimal dynamics and the classification of \CA s},  Proc. Natl. Acad. Sci. USA  {\bf 106} (2009), 16942--16943.
 
 \bibitem{W1}  W. Winter, {\em On the classification of simple ${\cal Z}$-stable $C^*$-algebras with real rank zero and finite decomposition rank}, J. London Math. Soc. {\bf 74} (2006),  167--183.
 
 \bibitem{WI} W.  Winter, {\em  Localizing the Elliott conjecture at strongly self-absorbing $C^*$-algebras},  J. Reine Angew. Math. {\bf 692} (2014), 193--231.
 
\bibitem{W13} W. Winter, {\em Classifying crossed product C*-algebras}, preprint
(arXiv:1308.5084).


 \end{thebibliography}
\end{document}